\tikzset{
  treenode/.style = {align=center, inner sep=0pt, text centered,
    font=\sffamily},
  arn_r/.style = {treenode, circle, blue, draw=blue,fill=yellow, 
    text width=1em, very thick},
    dot/.style={circle,draw,inner sep=1.2,fill=black},
}
\DeclareRobustCommand{\CChi}{{\mathpalette\irchi\relax}}
\newcommand{\irchi}[2]{\raisebox{\depth}{\large{$#1\chi$}}} 
\newcommand{\T}{\mathbb{T}_m}
\renewcommand{\S}{\mathcal{S}}
\newcommand{\C}{\mathcal{C}}
\newtheorem{teo}{Theorem}[section]
\newtheorem{lem}[teo]{Lemma}
\newtheorem{co}[teo]{Corollary}
\theoremstyle{remark}
\newtheorem{re}[teo]{Remark}
\theoremstyle{definition}
\def\N{{\mathbb N}}
\def\C{{\mathbb{C}}}
\title[Convex envelopes on Trees]{Convex envelopes on Trees}
\author[L. M. Del Pezzo, N. Frevenza and J. D. Rossi]
{Leandro M. Del Pezzo, Nicolas Frevenza and Julio D. Rossi}
\address{Leandro M. Del Pezzo and Julio D. Rossi
\hfill\break\indent
CONICET and Departamento de Matem{\'a}tica, FCEyN,
Universidad de Buenos Aires,
\hfill\break\indent Pabellon I, Ciudad Universitaria (1428),
Buenos Aires, Argentina.}
\address{Nicol{\'a}s Frevenza
\hfill\break\indent
Departamento de M{\'e}todos Cuantitativos, FCEA,
Universidad de la Rep{\'u}blica,
\hfill\break\indent Gonzalo Ram{\'i}rez 1926 (11200),
Montevideo, Uruguay.}
\email{{\tt ldpezzo@dm.uba.ar, nfrevenza@dm.uba.ar, jrossi@dm.uba.ar}}
\begin{document}

\begin{abstract} 
    We introduce two notions of convexity for an infinite regular 
    tree. For these two notions we show that given a continuous
    boundary datum there exists a unique convex envelope on the 
    tree and characterize the equation that this envelope 
    satisfies. We also relate the equation with two versions 
    of the Laplacian on the tree. Moreover, for a function defined 
    on the tree, the convex envelope turns out to be
    the solution to the obstacle problem for this equation.
\end{abstract}


\maketitle

\section{Introduction}

In mathematics, a real-valued function defined on an interval is called convex if the line segment between any two points on the graph of the function lies above or on the graph. Equivalently, a function is convex if its epigraph (the set of points on or above the graph of the function) is a convex set. For a twice differentiable function of a single variable, if the second derivative is always greater than or equal to zero in the entire interval then the function is convex.

Convex functions play an important role in many areas of mathematics. They are especially important in the study of optimization problems where they are distinguished by a number of convenient properties. For instance, a (strictly) convex function has no more than one minimum. Even in infinite-dimensional spaces, under suitable additional hypotheses, convex functions continue to satisfy such properties and as a result, they are the most well-understood functionals in the calculus of variations. In probability theory, a convex function applied to the expected value of a random variable is always less than or equal to the expected value of the convex function of the random variable. 

On the other hand, linear and nonlinear equations (coming mainly from mean value properties) on trees are models that are close (and related to) 
to linear and nonlinear PDEs in the unit ball of $\mathbb{R}^N$, therefore, it seems natural to look for convex functions
on trees. This is our main goal in this paper.
For other analytical issues on discrete structures (including graphs such 
    as trees) we refer to \cite{ary,BBGS,DPMR1,DPMR2,DPMR3,KLW,KW,Ober,s-tree,s-tree1} and references therein.

	Let us begin by making precise the well-known notion of convexity in $\mathbb{R}^N.$
	We fix a bounded smooth domain $\Omega \subset {\mathbb{R}}^N$.
	A function $u\colon\Omega \to {\mathbb{R}}$ is said to be  a convex function 
	if for any two points $x,y \in \Omega$ such that the segment $[x,y]$ is contained in $\Omega$, 
	it holds that
	\begin{equation} \label{convexo-usual}
		u(tx+(1-t)y) \leq tu(x) + (1-t) u(y), \qquad \forall t \in (0,1).
	\end{equation}
	With this definition one can define the convex envelope of a boundary
	datum $g \colon \partial \Omega \to  {\mathbb{R}}$ as
	\begin{equation} \label{convex-envelope-usual}
		u^* (x) \coloneqq \sup \left\{u(x)\colon u \text{ is convex and verifies } u|_{\partial \Omega} \leq g \right\}.
		\end{equation}
		Here by $u|_{\partial \Omega} \leq g,$ we understand 
		\[
			\limsup_{\Omega \ni x\to z}u(z)\le g(z)
			\quad\forall z\in\partial\Omega.
		\]

	This convex envelope turns out to be the largest solution to
	\begin{equation} \label{convex-envelope-usual-eq}
		\begin{array}{ll}
			\lambda_1 (D^2 u) (x) = 0 \qquad &x\in  \Omega, 
		\end{array}
		\end{equation}
		(the equation has to be
	interpreted in viscosity sense) with 
		\[
		u(x) \leq g (x)  \qquad x\in  \partial \Omega.
		\]
	Here $\lambda_1\leq \lambda_2\leq\cdots\leq\lambda_N$ are the ordered eigenvalues of
	the Hessian matrix, $D^2u$. We refer to \cite{BlancRossi,HL1,OS,Ober33}.
	Notice that the equation 
	\[
		\lambda_1 (D^2 u) (x) = 0 
	\]
	is equivalent to 
	\[
		\min \left\{\langle D^2 u(x) v, v \rangle\colon v\in\mathbb{R}^N 
		\text{ such that } \|v\|=1\right\} =0.
	\]
	 This says that the equation that governs the convex envelope
	is just the minimum among all possible directions of 
	the second derivative of the function at 
	$x$ equal to zero. Here we notice that we have existence of a continuous 	up to the boundary 
	convex envelope for every continuous data if and only if the domain is 			strictly convex, see \cite{BlancRossi,HL1,OS}.

	In this paper, our main goal is to develop similar ideas and concepts on 	a tree. 
	When one wants to expend the notion of convexity to an ambient space 			beyond
	the Euclidean setting the two key ideas are to introduce what is a ``{}segment"{}
	in our space and once this is done, to understand what is a ``{}midpoint"{} in the segment.
	We introduce two different definitions of segments and midpoints and study the associated
	notion of convexity linked to each of them. In particular, for both definitions we are able to 
	characterize the related equation that governs the convex envelope of a given boundary
	datum.  
	
	Closely related to our results is \cite{BKND} where the authors considered convex functions
	on finite trees and showed that some relevant functions that are naturally related to spectral
	problems on the tree are convex. 
	
	Before starting with our main goal, we need to introduce our 
	ambient space.
	Given $m\in\mathbb{N}_{\ge2},$ a tree $\T$ with regular 
	$m-$branching is an infinite graph that consists of
    the empty set $\emptyset$ and all finite  sequences 
    $(a_1,a_2,\dots,a_k)$ with $k\in\N,$ 
    whose coordinates $a_i$ are chosen from $\{0,1,\dots,m-1\}.$ 
    
    \begin{center}
        \pgfkeys{/pgf/inner sep=0.19em}
        \begin{forest}
            [$\emptyset$,
                [0
                    [0
                        [0 [,edge=dotted]]
                        [1 [,edge=dotted]]
                        [2 [,edge=dotted]]
                    ]
                    [1
                        [0 [,edge=dotted]]
                        [1 [,edge=dotted]]
                        [2 [,edge=dotted]]
                    ]
                    [2
                        [0 [,edge=dotted]]
                        [1 [,edge=dotted]]
                        [2 [,edge=dotted]]
                    ]
                ]
                [1
                    [0
                        [0 [,edge=dotted]]
                        [1 [,edge=dotted]]
                        [2 [,edge=dotted]]
                    ]
                    [1
                        [0 [,edge=dotted]]
                        [1 [,edge=dotted]]
                        [2 [,edge=dotted]]
                    ]
                    [2
                        [0 [,edge=dotted]]
                        [1 [,edge=dotted]]
                        [2 [,edge=dotted]]
                    ]
                ]
                [2
                    [0
                        [0 [,edge=dotted]]
                        [1 [,edge=dotted]]
                        [2 [,edge=dotted]]
                    ]
                    [1
                        [0 [,edge=dotted]]
                        [1 [,edge=dotted]]
                        [2 [,edge=dotted]]
                    ]
                    [2
                        [0 [,edge=dotted]]
                        [1 [,edge=dotted]]
                        [2 [,edge=dotted]]
                    ]
                ]    
            ]
        \end{forest}
        
       A tree with $3-$branching.
    \end{center}
    
    The elements in $\T$ are called vertices. 
    Each vertex $x$ has $m$ successors, obtained by adding 
    another coordinate. We will denote by 
    \[
        \S(x)\coloneqq\{(x,i)\colon i\in\{0,1,\dots,m-1\}\}
    \]
    the set of successors of the vertex $x.$ 
    If $x$ is not the root then $x$ has a only an 
    immediate predecessor, which we will denote $\hat{x}.$
    The segment connecting a vertex $x$ with $\hat{x}$ is called an 
    edge and denoted by $(\hat{x},x).$
     
    A vertex $x\in\T$ has level $k\in\mathbb{N}$ if $x=(a_1,a_2,\dots,a_k)$.   
    The level of $x$ is denoted by $|x|$ and 
    the set of all $k-$level vertices is denoted by $\T\!\!\!^k.$
    We say that the edge $e=(\hat{x},x)$ has $k-$level if 
    $x\in \T\!\!\!^k.$
        
    A branch of $\T$ is an infinite sequence of vertices, where each one of them is followed 
    by one of its immediate successors.
    The collection of all branches forms the boundary of $\T$, denoted 
    by $\partial\T$.
    Observe that the mapping $\psi:\partial\T\to[0,1]$ defined as
    \[
        \psi(\pi)\coloneqq\sum_{k=1}^{+\infty} \frac{a_k}{m^{k}}
    \]
    is surjective, where $\pi=(a_1,\dots, a_k,\dots)\in\partial\T$ and
    $a_k\in\{0,1,\dots,m-1\}$ for all $k\in\mathbb{N}.$ Whenever
    $x=(a_1,\dots,a_k)$ is a vertex, we set
    \[
        \psi(x)\coloneqq\psi(a_1,\dots,a_k,0,\dots,0,\dots).
    \]

	At this point, we just have to recall that as we have mentioned,
	the definition of a convex function depends on what 
	we understand by a segment and how to define the midpoint of the segment.
	
	A path from a vertex $x$ to a vertex $y$ in $\T$ is a sequence of 
	vertices $x_0, x_1, x_2, \dots, x_k$ such that 
	$x_0= x$, $x_k=y_0$ and for any $i= 1, 2, \dots, k$ we have that
	either $\hat{x}_{i-1}=x_i$ or $x_i\in\mathcal{S}({x}_{i-1})$, that is, $x_i$ and $x_{i+1}$ are adjacent 
	(connected)
	in the graph $\T$. 
	A path is called a minimal path if and only if all the vertices
	on the path are different. Observe that for any $x,y\in\T$ there is
	a unique minimal path from $x$ to $y.$ This minimal path is  
	denoted by $[x,y].$ This is our first idea of what is a segment of $\T$. 
	 \begin{center}
        \pgfkeys{/pgf/inner sep=0.19em}
        \begin{forest}
            [$\emptyset$,circle,fill=yellow,draw=blue,
                [0,circle,fill=yellow,draw=blue[0[0][1][2]][1[0][1][2]]
                [2,circle,fill=yellow,draw=blue[0]
                [$x$,circle,fill=yellow,draw=blue][2]]]
                [1,circle,fill=yellow,draw=blue[0,circle,fill=yellow,draw=blue[$y$,circle,fill=yellow,draw=blue][1][2]][1[0][1][2]][2[0][1][2]]]
                [2[0[0][1][2]][1[0][1][2]][2[0][1][2]]]
            ]
        \end{forest}
        
      A path from a vertex $x$ to a vertex $y$.
    \end{center}

	In a slight abuse of notation, we say that an edge $e$
	belongs to a path $\gamma$ if there is a vertex $x\in\gamma$ 
	such that $e=(\hat{x},x).$ 
	We now define the length of an edge $e$ as follows:
	\[
		\text{length} (e) \coloneqq \frac{1}{m^k} 
		\qquad \text{ if } e \text{  has level } k.
	\]
	With this length we can define the total length of a path $\gamma$ as the sum 
	of the lengths of the edges involved in $\gamma$, that is,
	\[
		\text{lenght} (\gamma) \coloneqq
		\sum_{e\in\gamma} \text{length} (e).
	\]
	Now, with this notion of length of an edge and then of a path, let us 	
	consider the distance between nodes given by the length 
	of the minimal path. 
	That is, given two nodes $x$, $y$ we let
	\[
		d(x,y) \coloneqq 
		\text{lenght}([x,y]).
	\]
	Remark that $d$ is a genuine metric since $d(x,y)\ge0,$
	$d(x,y)=0$ iff $x=y$ and the triangular inequality holds (since
	the infimum of the lengths of the paths that joins $x$ with $y$ 
	is less or equal than the infimum of the length of the paths 
	that joins $x$ with $z$ plus the infimum of the length of the paths 
	that joins $z$ with $y$). 
	
	We are now ready to introduce our first notion of convex function. We say that a function $u:\T \mapsto \mathbb{R}$ is convex if
	for any $x,y,z\in \T$ with $z\in[x,y],$  it holds that
	\[
		u(z) \leq \frac{d(y,z)}{d(x,y)} u(x) + 
		\frac{d(x,z)}{d(x,y)} u(y). 
	\]
	
	Using this definition, we can look for the convex envelope 
	of a function defined on $\partial \T$. Given a function $g\colon[0,1]\to\mathbb{R},$ 
	we define 
	the convex envelope of $g$ on $\T$ as follows
	\begin{equation} \label{convex-envelope-arbol}
			u^*_g (x) \coloneqq 
			\sup \Big\{u(x) \colon u\in\mathcal{C}(g) \Big\},
	\end{equation}
	where
	\[
	    \mathcal{C}(g)\coloneqq
	    \left\{ u\colon\T\to\mathbb{R}\colon 
			u \text{ is a convex function} \text{ and } 
			\limsup_{x\to \pi\in \partial \T} u(x)\leq g (\psi(\pi)) 
		\right\}.
	\]
	The convex envelope is unique (this follows easily since the maximum 
	of two convex functions is also convex). Moreover, 
	associated to this convex envelope we have a nonlinear equation that is verified on the whole tree.
	
	\begin{teo} \label{teo.convex.arbol} 
		The convex envelope of a continuous function $g\colon[0,1]\to\mathbb{R}$ 
		is the largest solution to
		\begin{equation} \label{eq.tree.convex}
			u (x)  = \min 
			\left\{ 
					\min_{\substack{y,z\in\mathcal{S}(x)\\ y\neq z}}
					\frac{u(y) +u(z)}2  ; 
					\min_{y\in\mathcal{S}(x)}  
					\frac{ u(\hat{x}) + m \, u(y)}{m+1} 
			\right\}\quad\text{on }\T,
		\end{equation}
		that verifies 
		\begin{equation}\label{eq:bordecond}
				\limsup_{x\to \pi\in \partial \T}u(x) \leq g (\psi(\pi)).
		\end{equation}
	\end{teo}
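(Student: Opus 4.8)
The plan is to prove the statement in two movements: first establish a purely \emph{local} characterization of convexity on $\T$, which already explains why the two minima in \eqref{eq.tree.convex} appear, and then exploit the maximality built into the definition \eqref{convex-envelope-arbol} to upgrade the resulting inequality to an equality.

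\textbf{Local versus global convexity.} I would first show that a function $u\colon\T\to\mathbb{R}$ is convex (in the sense of the definition preceding \eqref{convex-envelope-arbol}) if and only if it satisfies, at every vertex $x$, the inequality $u(x)\le R(x)$, where $R(x)$ denotes the right-hand side of \eqref{eq.tree.convex}. The implication ``convex $\Rightarrow$ local'' is obtained by specializing the convexity inequality to triples in which $x$ is the intermediate point between two of its neighbours. There are exactly two types of such neighbour pairs. If $y,z\in\mathcal{S}(x)$ are two distinct successors, then $x$ is the midpoint of $[y,z]$ with $d(x,y)=d(x,z)=m^{-(|x|+1)}$, so convexity gives $u(x)\le\tfrac12\bigl(u(y)+u(z)\bigr)$. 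If instead we pair a successor $y\in\mathcal{S}(x)$ with the predecessor $\hat{x}$, then $x\in[\hat{x},y]$ with $d(\hat{x},x)=m^{-|x|}$ and $d(x,y)=m^{-(|x|+1)}$, so the weights are $d(x,y)/d(\hat{x},y)=\tfrac{1}{m+1}$ and $d(\hat{x},x)/d(\hat{x},y)=\tfrac{m}{m+1}$, giving $u(x)\le\tfrac{1}{m+1}u(\hat{x})+\tfrac{m}{m+1}u(y)$. These are precisely the two families over which the minimum in \eqref{eq.tree.convex} is taken (at the root, where $\hat{x}$ is absent, only the first family survives). For the converse I would use that any minimal path $[x,y]$ ascends from $x$ to the lowest common ancestor $x\wedge y$ and then descends to $y$; writing the vertices as $v_0,\dots,v_n$ with positions $t_i=d(x,v_i)$, the value of $u$ along the path is the piecewise-linear profile through $(t_i,u(v_i))$. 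Such a profile is convex if and only if it is convex at each interior node $v_i$, i.e.\ the three-point inequality relative to the neighbours $v_{i-1},v_{i+1}$ holds. Since $\{v_{i-1},v_{i+1}\}$ is always a pair of distinct neighbours of $v_i$, it is one of the two admissible types above, and the matching-weight computation shows that the local inequality $u(v_i)\le R(v_i)$ is exactly the needed three-point convexity. Hence local convexity forces convexity along every minimal path, which is the desired global convexity.

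\textbf{Well-posedness and the inequality $u^*_g\le R$.} Next I would check that $u^*_g$ is a genuine convex function. It is bounded above: given a vertex $x$, choose boundary points $\pi,\pi'$ descending through two distinct children of $x$, so that $x\in[\pi,\pi']$; for vertices $y,z$ deep along the two rays, convexity yields $u(x)\le\lambda u(y)+(1-\lambda)u(z)$ with $\lambda\in(0,1)$, and letting $y\to\pi$, $z\to\pi'$ together with $\limsup u(y)\le g(\psi(\pi))$ gives $u(x)\le\max|g|$ uniformly over $u\in\mathcal{C}(g)$. That the supremum of convex functions is convex follows by passing the inequality $u(z)\le\lambda u(x)+(1-\lambda)u(y)\le\lambda u^*_g(x)+(1-\lambda)u^*_g(y)$ through the supremum in $u$. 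By the local characterization, convexity of $u^*_g$ is equivalent to $u^*_g(x)\le R(x)$ for all $x$.

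\textbf{Boundary condition, equality, and maximality.} To see that $u^*_g$ itself satisfies \eqref{eq:bordecond}, I would sharpen the comparison of the previous paragraph by choosing both $\pi,\pi'$ \emph{inside the subtree rooted at $w$}: as $w\to\pi_0$ the level $|w|$ tends to infinity, so $\psi(\pi),\psi(\pi')$ lie in an interval around $\psi(\pi_0)$ of length $O(m^{-|w|})$, and continuity of $g$ gives $u(w)\le g(\psi(\pi_0))+\varepsilon$ for every $u\in\mathcal{C}(g)$, hence $u^*_g(w)\le g(\psi(\pi_0))+\varepsilon$. The delicate point—and the step I expect to be the main obstacle—is precisely this boundary regularity: the level below which an individual $u$ satisfies $u(y)\le g+\varepsilon$ depends on $u$, so uniformity over the whole family must be extracted from the fact that $w$ sits above $y,z$ no matter how deep they are chosen. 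Finally, for the equality in \eqref{eq.tree.convex} I would argue by maximality: if $u^*_g(x_0)<R(x_0)$ at some vertex $x_0$, define $v$ to agree with $u^*_g$ except $v(x_0)=u^*_g(x_0)+\varepsilon$ with $0<\varepsilon\le R(x_0)-u^*_g(x_0)$. By the local characterization only the inequalities at $x_0$ and at its neighbours can be affected; the one at $x_0$ still holds since $R(x_0)$ is unchanged, and at each neighbour $w$ the value $v(x_0)$ enters $R(w)$ with nonnegative weight, so $R(w)$ does not decrease and the inequality $v(w)\le R(w)$ persists. As $v$ differs from $u^*_g$ at a single interior vertex it still satisfies \eqref{eq:bordecond}, so $v\in\mathcal{C}(g)$ with $v(x_0)>u^*_g(x_0)$, contradicting \eqref{convex-envelope-arbol}. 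Thus $u^*_g$ solves \eqref{eq.tree.convex}. Maximality is immediate from the local characterization: any solution $w$ of \eqref{eq.tree.convex} satisfying \eqref{eq:bordecond} obeys $w\le R$, hence is convex and belongs to $\mathcal{C}(g)$, so $w\le u^*_g$.
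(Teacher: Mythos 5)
Your proposal is correct, and its overall skeleton coincides with the paper's: a local characterization lemma (convexity is equivalent to the pointwise inequality $u\le R$, which is the paper's Lemma \ref{lema:convexeq}), convexity of the supremum (Lemma \ref{lema:convenv1}), the one-vertex bump to upgrade the inequality to an equality, and maximality via ``solution $\Rightarrow$ inequality $\Rightarrow$ convex $\Rightarrow$ belongs to $\mathcal{C}(g)$'' (Theorem \ref{teo:largesol}). You diverge in two sub-steps, one cosmetic and one genuine. For the converse of the local lemma you transport a minimal path to an interval via $t_i=d(x,v_i)$ and invoke the classical fact that a piecewise-linear profile is convex iff the three-point inequality holds at each breakpoint; the paper reproves this by hand, by induction on the number of vertices, first for monotone (``straight line'') paths and then gluing two such paths at the lowest common ancestor --- same idea, your bookkeeping is cleaner. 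The genuine difference is how you obtain \eqref{eq:bordecond} for $u^*_g$ itself, which is indeed indispensable (both to call $u^*_g$ a solution verifying \eqref{eq:bordecond} and to keep the bumped function $v$ inside $\mathcal{C}(g)$), and is not automatic since $\limsup$ does not commute with the supremum defining $u^*_g$. You prove it directly: applying the convexity inequality at $w$ with both boundary points taken in the subtree below $w$ gives $u(w)\le\max\{g(\psi(\pi)),g(\psi(\pi'))\}+\varepsilon$ for \emph{every} $u\in\mathcal{C}(g)$, and --- as you correctly isolate --- the threshold level for $w$ depends only on the modulus of continuity of $g$, not on $u$, because the weights lie in $[0,1]$ no matter how deep $y,z$ are chosen; this is exactly why the bound survives the supremum. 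The paper instead deduces \eqref{eq:bordecond} from the stronger Theorem \ref{teo:limb} ($\lim_{x\to\pi}u^*_g(x)=g(\psi(\pi))$), whose proof needs the explicit convex barriers of Lemma \ref{lema:caract} together with the comparison principle of Lemma \ref{lema.compar.convexas}. Your route is lighter and self-contained for this particular theorem (and your maximality step also avoids the paper's slightly loose claim that the supremum $\overline{u}$ of all solutions is itself a solution); the paper's heavier machinery buys the two-sided boundary limit, which is what Theorem \ref{teo.convex.arbol.eq} requires anyway.
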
	
	
	Let us clarify that in the case $x=\emptyset$, relation 
	\eqref{eq.tree.convex} is understood as 
	\[
	    u (x)=
	    \min_{\substack{y,z\in\mathcal{S}(x)\\ y\neq z}}
					\frac{u(y) +u(z)}2,
	\]
	since $\emptyset$ does not have a predecessor because it is the 
	root of $\T.$
	
	Notice that \eqref{eq.tree.convex} is a nonlinear mean value property at the nodes of the tree.
	
	Once we have characterized the convex envelope by means of being the largest solution 
	to \eqref{eq.tree.convex} that is below the datum on $\partial \T$, we want to study the 
	associated Dirichlet problem, that is, given a datum $g$ on $\partial \T$ we want to solve the equation 
	in $\T$ and find a solution that attains continuously the datum in the sense that
	\begin{equation}\label{eq:bordecond.77}
				\lim_{x\to \pi\in \partial \T}u(x) = g (\psi(\pi)).
		\end{equation}
	For this Dirichlet problem, we can show existence and uniqueness for continuous data.
	\begin{teo} \label{teo.convex.arbol.eq} 
		Given a continuos boundary datum $g$, there is a unique
		solution to \eqref{eq.tree.convex} that verifies 
		\eqref{eq:bordecond.77}. 
	\end{teo}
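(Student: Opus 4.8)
The plan is to deduce the statement from Theorem~\ref{teo.convex.arbol} together with a comparison principle for \eqref{eq.tree.convex}. The comparison principle yields uniqueness immediately and also the maximum principle that keeps solutions bounded; existence then follows by showing that the convex envelope $u^*_g$ of \eqref{convex-envelope-arbol}, which by Theorem~\ref{teo.convex.arbol} already solves \eqref{eq.tree.convex} and satisfies \eqref{eq:bordecond}, in fact \emph{attains} the datum, i.e.\ that the reverse inequality $\liminf_{x\to\pi}u^*_g(x)\ge g(\psi(\pi))$ holds at every $\pi\in\partial\T$.

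First I would establish the following \emph{comparison principle}: if $u$ satisfies all the inequalities $u(x)\le\frac{u(y)+u(z)}{2}$ and $u(x)\le\frac{u(\hat x)+m\,u(y)}{m+1}$ obtained by dropping the minimum in \eqref{eq.tree.convex} (a subsolution), and $v$ satisfies $v(x)\ge$ the right-hand side of \eqref{eq.tree.convex} (a supersolution), with $\limsup_{x\to\pi}(u-v)(x)\le0$ for every $\pi\in\partial\T$, then $u\le v$ on $\T$. Any solution of \eqref{eq.tree.convex} is at once a sub- and a supersolution, so the principle applies to any pair of solutions. To prove it, set $w=u-v$ and $M=\sup_\T w$, and suppose $M>0$. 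Since $\overline{\T}=\T\cup\partial\T$ is compact for $d$ and each vertex is isolated (the ball of radius $m^{-(|x|+1)}$ around $x$ meets no other vertex), a maximizing sequence either escapes to $\partial\T$, which the boundary condition $\limsup(u-v)\le0<M$ forbids, or accumulates at a vertex; hence $M=w(x_0)$ is attained at some $x_0\in\T$. Because $v$ is a supersolution, at least one term of the minimum at $x_0$ is $\le v(x_0)$. If it is a first-type term, there are $y\ne z$ in $\S(x_0)$ with $\frac{v(y)+v(z)}{2}\le v(x_0)$, and combining with $u(x_0)\le\frac{u(y)+u(z)}{2}$ gives $M\le\frac{w(y)+w(z)}{2}\le M$, forcing $w(y)=w(z)=M$; if it is a second-type term, there is $y\in\S(x_0)$ with $\frac{v(\hat x_0)+m\,v(y)}{m+1}\le v(x_0)$, and with $u(x_0)\le\frac{u(\hat x_0)+m\,u(y)}{m+1}$ this forces $w(\hat x_0)=w(y)=M$. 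In either case $\{w=M\}$ contains a successor of $x_0$; iterating produces an infinite branch on which $w\equiv M$, and this branch converges to some $\pi^\ast\in\partial\T$ with $\limsup_{x\to\pi^\ast}w(x)\ge M>0$, a contradiction. Thus $M\le0$ and $u\le v$. Applying this to two solutions attaining $g$ continuously (in both orders) gives uniqueness, and applying it with a constant sub- or supersolution gives $\min g\le u\le\max g$, so such solutions are bounded.

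For existence it remains to prove the attainment inequality for $u^*_g$. As $u^*_g=\sup\{u:u\in\mathcal{C}(g)\}$, it suffices to produce, for each $\pi_0\in\partial\T$ with $t_0=\psi(\pi_0)$ and each $\varepsilon>0$, a convex \emph{lower barrier} $\underline w\in\mathcal{C}(g)$ with $\liminf_{x\to\pi_0}\underline w(x)\ge g(t_0)-\varepsilon$; then $u^*_g\ge\underline w$ gives $\liminf_{x\to\pi_0}u^*_g\ge g(t_0)-\varepsilon$, and letting $\varepsilon\to0$ and combining with \eqref{eq:bordecond} yields \eqref{eq:bordecond.77}. One first records that $x\mapsto d(x,\pi_0)$ is convex (in fact a solution of \eqref{eq.tree.convex}), which immediately furnishes \emph{upper} barriers $g(t_0)+L\,d(\cdot,\pi_0)$; the difficulty, and the main obstacle of the whole argument, is that every natural candidate for a lower barrier has the wrong sign, so one cannot simply use a multiple of $d(\cdot,\pi_0)$. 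Instead I would exploit the branching structure: using the uniform continuity of $g$, choose $N$ with $|g(s)-g(t_0)|\le\varepsilon$ whenever $|s-t_0|\le m^{-N}$, so that $g\ge g(t_0)-\varepsilon$ on the boundary of the subtree hanging from the level-$N$ ancestor $v_N$ of $\pi_0$, and there set $\underline w\equiv g(t_0)-\varepsilon$. On the remainder of $\T$ one continues $\underline w$ as a convex function that decreases, along the branch toward the root, by a convex $U$-shaped profile in the arclength $d(\emptyset,\cdot)$, fast enough that the off-branch subtrees can be brought below the possibly much smaller (but bounded below by $\min g$) values of $g$. The technical heart is verifying that this glued function satisfies all the inequalities of \eqref{eq.tree.convex}: the constraints linking a vertex, its predecessor and its successors force the off-branch values above a predecessor-determined chord, and one must tune the decay of the profile and take $N$ large so that these forced values still remain below $g$ on every boundary piece. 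Once such a barrier is available, existence follows as above, completing the proof.
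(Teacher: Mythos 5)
Your overall architecture is the same as the paper's: a comparison principle gives uniqueness, and existence is reduced to showing that the convex envelope $u^*_g$ (which, by Theorem \ref{teo.convex.arbol}, solves \eqref{eq.tree.convex} and satisfies \eqref{eq:bordecond}) actually attains the datum. Your proof of the comparison principle is correct, and your propagation of the maximum along an infinite branch to contradict the boundary condition is, if anything, a cleaner variant of the paper's finite-maximum-set argument (Lemma \ref{lema.compar.convexas}). The genuine gap is in the existence half: the lower barrier you propose cannot exist.

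Indeed, suppose $\underline w$ is convex and $\underline w\equiv A\coloneqq g(t_0)-\varepsilon$ on the whole subtree hanging from $v_N$. By Lemma \ref{lema:convexeq}, $\underline w$ satisfies \eqref{subsol1}; applying the second family of inequalities at $v_N$, namely $A=\underline w(v_N)\le \frac{\underline w(v_{N-1})+m\,\underline w(y)}{m+1}$ with $y\in\mathcal{S}(v_N)$ and $\underline w(y)=A$, forces $\underline w(v_{N-1})\ge A$, and iterating up the branch forces $\underline w(v_k)\ge A$ for \emph{every} ancestor, with values non-increasing as one descends toward $v_N$. This is the tree analogue of the one-dimensional fact that a convex function constant on a subinterval lies above that constant everywhere; so your barrier cannot ``decrease along the branch toward the root''. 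Worse, the pair inequalities at each ancestor $v_k$ then force every off-branch successor $z$ to satisfy $\underline w(z)\ge 2\underline w(v_k)-\underline w(v_{k+1})\ge A$, and inside the subtree below each such $z$ the pair inequalities (at most one successor of a node can lie below the node's value) produce an infinite branch along which $\underline w\ge A$. Hence every off-branch interval $I_z$, at every level, contains a boundary point where $\limsup \underline w\ge A$, so $\underline w\in\mathcal{C}(g)$ forces $g\ge g(t_0)-\varepsilon$ at points spread through all of $[0,1]$. For a continuous $g$ that is small away from $t_0$ (say $g(t_0)=1$ and $g\equiv 0$ outside a small neighborhood of $t_0$) no such barrier exists, and no ``tuning of the decay profile'' or choice of large $N$ can repair this.

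The missing ideas are exactly the two ingredients of the paper's Theorem \ref{teo:limb}. First, normalize $g\ge 0$: since $u\in\mathcal{C}(g)$ iff $u+c\in\mathcal{C}(g+c)$, one may add a constant, and then the value $0$ outside the subtree is an admissible floor — this disposes of the ``wrong sign'' obstruction you point out. Second, the barrier of Lemma \ref{lema:caract} is \emph{not} constant on the subtree $\T^{z_n}$: with $c=\min\{g(t)\colon t\in I_{z_n}\}\ge0$ it equals $c\,\frac{m-1}{m}\sum_{i=0}^{|x|-|z_n|}m^{-i}$ on $\T^{z_n}$ and $0$ elsewhere, i.e.\ it rises geometrically from $c\frac{m-1}{m}$ at $z_n$ toward $c$ at the boundary; this precise profile makes all inequalities in \eqref{subsol1} hold (with equality), compatibly with the value $0$ at the predecessor of $z_n$. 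With these barriers one gets $\liminf_{x\to\pi_0}u^*_g\ge \min_{I_{z_n}}g\to g(\psi(\pi_0))$, and then your scheme (comparison for uniqueness, Theorem \ref{teo.convex.arbol} for the remaining properties of $u^*_g$) completes the proof as in the paper.
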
	
	Therefore, from Theorems \ref{teo.convex.arbol}  and \ref{teo.convex.arbol.eq}, we have that for every continuous datum on $\partial \T$ the unique 
	convex envelope attains this datum with continuity, that is, \eqref{eq:bordecond.77} holds.
	Recall that in the Euclidean case we have to ask the domain $\Omega$ to be strictly convex for
	the validity of this
	continuity up to the boundary of the convex envelope.

	Notice that the equation \eqref{eq.tree.convex} can be rewritten as
	\begin{equation} \label{eq.tree.convex.derivadassegundas}
		0  = \min 
		\left\{ 
			\min_{\substack{y,z\in\mathcal{S}(x)\\ y\neq z}}
				\frac{u(y) + u(z) -2u(x)}2 ; 
				\min_{y\in\mathcal{S}(x)}  
					\frac{u(\hat{x}) + m u(y) 
					 -(m+1) u(x)}{m+1},		  
		\right\}
	\end{equation}
	and hence we identify one possible analogous to the eigenvalues of 
	the Hessian for the Euclidean case but in the case of the tree 
	\begin{equation} \label{uu}
		\left\{ 
			\frac{u(x,i) + u(x,j) -2u(x)}2 
		\right\}_{i < j} 
		\text{ and } 
		\left\{ 
			\frac{u(\hat{x}) + m u(y) 
					 -(m+1) u(x)}{m+1}
		\right\}_{y\in\S(x)}.
	\end{equation}
	Recall that for the convex envelope
	in the Euclidean space the associated equation reads as 
	\[
		\min_{1\le j\le N} \lambda_j (D^2 u)=0,
	\] 
	and compare it with \eqref{eq.tree.convex.derivadassegundas}.
	Then, recalling that in the Euclidean setting when we add the 
	eigenvalues of the Hessian we obtain the Laplacian, we obtain the 
	following versions of a Laplacian on the tree adding 
	the expressions found in \eqref{uu},
	\begin{equation} \label{eq.Laplaciano.tree}
		u(x)= 
		\frac{2}{(m+1)^2} u(\hat{x} ) +
		\frac{ m^2 +2m -1}{(m+1)^2}  \frac1{m}\sum_{y\in\S(x)} u(y).
	\end{equation}
	Notice that this is a special case of the equations (given by mean value properties) that we previously studied 
	in \cite{DPFR-DtoN} showing existence and uniqueness for the Dirichlet problem.
	
	Finally, we also study the convex envelope of a function defined on $\T$. That is, given a bounded function
    $f:\T \mapsto \mathbb{R}$ (not necessarily convex), we look for its convex envelope that is given by
    \begin{equation} \label{convex-envelope-arbol.f}
			u^*_f (x) \coloneqq 
			\sup \big\{u(x) \colon u\in\mathcal{C}(f) \big\},
	\end{equation}
	where
	\[
	    \mathcal{C}(f)\coloneqq
	    \Big\{ u\colon\T\to\mathbb{R}\colon 
			u \text{ is a convex function and }
			 u(x)\leq f (x) \ \forall x\in \T
		\Big\}.
	\]
	The convex envelope is unique (this follows easily since the maximum 
	of two convex functions is also convex). Notice that when $f$ attains a minimum this minimum coincides
	with the minimum of the convex envelope (and it is attained at the same vertices). 
	This convex envelope turns out to be the solution to
	the obstacle problem for the equation \eqref{eq.tree.convex}. For the analogous property in the Euclidean 
	setting, we refer to \cite{Ober33}.
	Recall that for the obstacle problem one important set is the coincidence set, that is given by the
set of points where $f$ and its convex envelope $u^*_f$ coincide,
\[
CS(f) =\left\{ x\in \T \, : \, f(x) = u^*_f (x)  \right\}.
\]
	
	\begin{teo} \label{teo.convex.arbol.f} 
		The convex envelope of a function $f\colon \T \to\mathbb{R}$ 
		is the solution to the obstacle problem for the equation \eqref{eq.tree.convex}. 
		That is, $u^*_f$
		is the largest solution to
		\begin{equation} \label{eq.tree.convex.ffff}
			u (x)  \leq \min 
			\left\{ 
					\min_{\substack{y,z\in\mathcal{S}(x)\\ y\neq z}}
					\frac{u(y) +u(z)}2  ; 
					\min_{y\in\mathcal{S}(x)}  
					\frac{ u(\hat{x}) + m u(y)}{m+1} 
			\right\}\quad\text{on }\T,
		\end{equation}
		that verifies 
		\[
				u(x) \leq f(x) \qquad \forall x \in \T.
		\]
		
In the coincidence set, the function $f$ verifies an inequality
\begin{equation} \label{eq.tree.convex.ffff.latengo}
			f (x)  \leq \min 
			\left\{ 
					\min_{\substack{y,z\in\mathcal{S}(x)\\ y\neq z}}
					\frac{f(y) +f(z)}2  ; 
					\min_{y\in\mathcal{S}(x)}  
					\frac{ f(\hat{x}) + m f(y)}{m+1} 
			\right\}\quad\text{on } CS(f),
		\end{equation}
		while outside the coincidence set the convex envelope, $u^*_f$, is a solution of the equation, i.e., it holds
		\begin{equation} \label{eq.tree.convex.ffff.latengo.mmmm}
			u^*_f (x)  = \min 
			\left\{ 
					\min_{\substack{y,z\in\mathcal{S}(x)\\ y\neq z}}
					\frac{u^*_f(y) +u^*_f(z)}2  ; 
					\min_{y\in\mathcal{S}(x)}  
					\frac{ u^*_f(\hat{x}) + m u^*_f(y)}{m+1} 
			\right\}\quad\text{on }\T\setminus CS(f).
		\end{equation}
		\end{teo}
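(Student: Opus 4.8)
\emph{Proof strategy.} The whole argument rests on identifying convexity with the pointwise inequality \eqref{eq.tree.convex.ffff}. In one direction this is immediate from the definition of a convex function on $\T$: applying the convexity inequality to the triple $y,x,z$ with $y,z\in\S(x)$ and $y\neq z$ (for which $x\in[y,z]$ and $d(y,x)=d(x,z)=\tfrac12 d(y,z)$) gives $u(x)\le\tfrac12\bigl(u(y)+u(z)\bigr)$, while applying it to the triple $\hat x,x,y$ with $y\in\S(x)$ (for which $x\in[\hat x,y]$ with the weights $\tfrac1{m+1}$ and $\tfrac{m}{m+1}$ on $u(\hat x)$ and $u(y)$) gives $u(x)\le\tfrac1{m+1}\bigl(u(\hat x)+m\,u(y)\bigr)$; taking minima yields \eqref{eq.tree.convex.ffff}. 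The converse, that \eqref{eq.tree.convex.ffff} propagates to the full convexity inequality along an arbitrary minimal path $[x,y]$, is the telescoping computation already carried out in the proof of Theorem \ref{teo.convex.arbol}. Hence the admissible class $\mathcal{C}(f)$ in \eqref{convex-envelope-arbol.f} coincides exactly with the set of solutions of the obstacle problem, namely the functions obeying \eqref{eq.tree.convex.ffff} together with $u\le f$ on $\T$; note that since $f$ is bounded the supremum defining $u^*_f$ is finite.

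First I would check that $u^*_f$ is itself the largest such solution. Both constraints are stable under pointwise suprema: for fixed $y,z\in\S(x)$ and any $u\in\mathcal{C}(f)$ one has $u(x)\le\tfrac12\bigl(u(y)+u(z)\bigr)\le\tfrac12\bigl(u^*_f(y)+u^*_f(z)\bigr)$, and taking the supremum over $u$ on the left gives $u^*_f(x)\le\tfrac12\bigl(u^*_f(y)+u^*_f(z)\bigr)$; the predecessor term is handled identically, and $u^*_f\le f$ is clear. Thus $u^*_f$ satisfies \eqref{eq.tree.convex.ffff} and $u^*_f\le f$, so $u^*_f\in\mathcal{C}(f)$ and it is the maximal element of $\mathcal{C}(f)$, i.e. the largest solution of the obstacle problem.

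The coincidence-set inequality \eqref{eq.tree.convex.ffff.latengo} then follows by a one-line substitution: for $x\in CS(f)$ we have $f(x)=u^*_f(x)$, so combining \eqref{eq.tree.convex.ffff} for $u^*_f$ with the global bound $u^*_f\le f$ gives $f(x)=u^*_f(x)\le\tfrac12\bigl(u^*_f(y)+u^*_f(z)\bigr)\le\tfrac12\bigl(f(y)+f(z)\bigr)$ and likewise $f(x)=u^*_f(x)\le\tfrac1{m+1}\bigl(u^*_f(\hat x)+m\,u^*_f(y)\bigr)\le\tfrac1{m+1}\bigl(f(\hat x)+m\,f(y)\bigr)$, which is exactly \eqref{eq.tree.convex.ffff.latengo}.

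The delicate part, and the one I expect to be the main obstacle, is the equation \eqref{eq.tree.convex.ffff.latengo.mmmm} on $\T\setminus CS(f)$, which I would establish by contradiction through a one-point bump. Suppose at some $x_0$ with $u^*_f(x_0)<f(x_0)$ the inequality \eqref{eq.tree.convex.ffff} were strict, and choose $\delta>0$ smaller than both the resulting gap and $f(x_0)-u^*_f(x_0)$. Set $\tilde u\defi u^*_f+\delta\,\mathbf{1}_{\{x_0\}}$; then $\tilde u\le f$ by the second choice of $\delta$, and it remains to verify \eqref{eq.tree.convex.ffff} for $\tilde u$ at every vertex --- this is the crux. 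At $x_0$ the right-hand side is unchanged, as it depends only on the neighbours of $x_0$, while the left-hand side increases by $\delta$, so the inequality survives by the first choice of $\delta$. At any other vertex the left-hand side is untouched, so it suffices that the right-hand side not decrease; but $u(x_0)$ enters the right-hand side only at the predecessor $\hat x_0$ (with coefficient $\tfrac12$ in a sibling term and $\tfrac{m}{m+1}$ in the predecessor term) and at each successor $y\in\S(x_0)$ (with coefficient $\tfrac1{m+1}$), and since all these coefficients are positive, raising $u(x_0)$ can only increase the corresponding minima. Therefore $\tilde u\in\mathcal{C}(f)$ with $\tilde u(x_0)>u^*_f(x_0)$, contradicting the maximality of $u^*_f$; consequently \eqref{eq.tree.convex.ffff} must hold with equality off the coincidence set, which is \eqref{eq.tree.convex.ffff.latengo.mmmm}.
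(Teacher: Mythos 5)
Your proposal is correct and follows essentially the same route as the paper's proof: both arguments rest on the equivalence between convexity and the pointwise inequality \eqref{subsol1} from Lemma \ref{lema:convexeq}, use it to identify $\mathcal{C}(f)$ with the admissible class of the obstacle problem (so $u^*_f$ is the largest solution), deduce the coincidence-set inequality by combining \eqref{eq.tree.convex.ffff} for $u^*_f$ with $u^*_f\le f$, and establish the equation off the coincidence set by the same one-point bump contradiction. The only difference is cosmetic: you verify explicitly that raising the value at $x_0$ cannot decrease the right-hand side at $\hat{x}_0$ or at the successors of $x_0$, a point the paper asserts without detail.
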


	On the other hand, in the arborescence (directed) tree, i.e., the tree defined as before 
	but adding a direction to the edges in such a way that any two edges are not directed to 
	the same vertex and the root is the unique vertex that has no edge directed into it), the 
	Laplacian is defined as the mean value of the successors minus the value at the vertex, that is, 
	\[
		 \Delta u (x) \coloneqq \frac1m \sum_{y\in\S(x)} u(y) - u(x)
		\quad\forall x\in\T,
	\]
	see \cite{KLW}.
	Now, to obtain an interpretation of this Laplacian   
	as the sum of eigenvalues of the Hessian as we did before, 
	we just have to change the notion of convexity.

Now we need to introduce extra notations.
	Given $x\in\T,$ $\mathbb{T}_{2}^{x}$ denotes
	the set of all subgraphs $\mathbb{B}$ that are formed from a finite subset 
	of the vertices of $\T$ and such that $x\in\mathbb{B},$ 
	$\S(x)\cap\mathbb{B}$ has two elements and
	for any $y\in\mathbb{B}\setminus\{x\},$ 
	\begin{itemize}
	    \item $|y|>|x|;$
	    \item either $\S(y)\cap\mathbb{B}=\emptyset$  
	        or  $\S(y)\cap\mathbb{B}$ has exactly two elements.
	\end{itemize}    
	We say that $y\in \mathbb{B}$ is an endpoint
	of $\mathbb{B}$ if $\S(y)\cap\mathbb{B}=\emptyset.$ The set of
	all endpoints of $\mathbb{B}$ is denoted $\mathcal{E}(\mathbb{B}).$
	That is, $\mathbb{B}$ is just a finite binary subtree of $\T$.
	
	 \begin{center}
        \pgfkeys{/pgf/inner sep=0.18em}
        \begin{forest}
            [$\emptyset$,
                [$x$, circle,fill=green,draw =orange 
                    [0[0][1][2]]
                    [1,  circle,fill=green,draw =orange [0][1][2]]
                    [2,  circle,fill=green,draw =orange
                        [0,  circle,fill=green,draw =orange]
                            [1,  circle,fill=green,draw  =orange][2]]]
                [1[0[0][1][2]][1[0][1][2]][2[0][1][2]]]
                [2[0[0][1][2]][1[0][1][2]][2[0][1][2]]]
            ]
        \end{forest}
       
       An  element of $\mathbb{T}_{2}^{x}.$ Root: $x$. Endpoints: $(x,1),$ $(x,2,0),$ and 
       $(x,2,1).$
    \end{center}
	
	Our second notion of convexity is the following: a function $u:\T \to \mathbb{R}$ is called binary convex if
    for any $x\in\T$
    \begin{equation}\label{convex.II}
        u(x) \leq \sum_{y\in\mathcal{E}(\mathbb{B})}\dfrac1{2^{|y|-|x|}} u(y)
        \quad \forall \mathbb{B}\in\mathbb{T}_{2}^{x}.
    \end{equation}
    In this notion of convexity, a segment is $\mathbb{B}$, a finite binary subgraph of $\T$; a midpoint
    is the root of this subgraph $\mathbb{B}$ and the convexity property just says that the value of the function
    at the midpoint is less or equal than the mean value of the values at the endpoints.
        
    Associated to this new version of convexity, we have a convex envelope 
    of a bounded boundary datum $g$ that is, defined as in \eqref{convex-envelope-arbol}, that is
    we define the  binary convex envelope of $g$ on $\T$ as follows
	\[
		\tilde{u}_g(x) \coloneqq 
			\sup \left\{u(x) \colon u\in\mathcal{B}(g) 
			 \right\}
	\]
	where
	\[
	    \mathcal{B}(g)\coloneqq \left\{
	    u\colon \T\to\mathbb{R}\colon 
	    \text{ is a binary convex function on } \T, 
		\limsup_{x\to \pi\in \partial \T} u(x)\leq g (\psi(\pi)) \right\}.
	\]
	
    For this notion of binary convex envelope, we also have an equation
(simpler than with the previous notion).

    \begin{teo} \label{teo.convex.arbol.II} 
        The binary convex envelope of a bounded boundary datum $g$ 
        is the largest solution to
        \begin{equation} \label{eq.tree.convex.II}
            u(x) = 
                \min_{\substack{y,z\in\mathcal{S}(x)\\ y\neq z}}
              \frac{u(y) + u(z)}2  \quad\text{on }\T,
        \end{equation}
        that verifies \eqref{eq:bordecond}.
    \end{teo}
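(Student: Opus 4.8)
The plan is to reduce binary convexity to a one-step ``subsolution'' inequality, to prove a comparison principle adapted to the downward structure of \eqref{eq.tree.convex.II}, and then to run a Perron-type argument. First I would record the elementary but crucial fact that $u\colon\T\to\R$ is binary convex if and only if it satisfies the one-step inequality $u(x)\le\frac{u(y)+u(z)}{2}$ for all $x\in\T$ and all $y,z\in\mathcal{S}(x)$ with $y\neq z$, equivalently $u(x)\le\min_{y\neq z}\frac{u(y)+u(z)}{2}$. One implication is immediate by taking $\mathbb{B}\in\mathbb{T}_2^x$ to consist of $x$ together with two of its successors as endpoints. For the converse one telescopes the one-step inequality down an arbitrary $\mathbb{B}\in\mathbb{T}_2^x$: each internal vertex splits its weight in half, and since the endpoint weights $2^{-(|y|-|x|)}$ sum to $1$ on a binary subtree, iterating yields \eqref{convex.II}. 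Thus $\mathcal{B}(g)$ is exactly the family of subsolutions of \eqref{eq.tree.convex.II} satisfying \eqref{eq:bordecond}, and $\tilde{u}_g$ is their pointwise supremum.

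The key tool is a comparison principle: if $u$ is a subsolution and $w$ a supersolution of \eqref{eq.tree.convex.II} on the subtree $T_{x_0}$ of descendants of a vertex $x_0$, and if $\limsup_{x\to\pi}(u-w)\le0$ along every branch $\pi$ through $x_0$, then $u\le w$ on $T_{x_0}$. The decisive structural feature is that \eqref{eq.tree.convex.II} refers only to successors, so each subtree is self-contained. To prove it, set $v=u-w$; at every vertex $\xi$, choosing the pair $\hat y,\hat z\in\mathcal{S}(\xi)$ that realizes the minimum for $w$ gives $w(\xi)\ge\frac{w(\hat y)+w(\hat z)}{2}$, while the subsolution inequality for the same pair gives $u(\xi)\le\frac{u(\hat y)+u(\hat z)}{2}$; subtracting yields $v(\xi)\le\frac{v(\hat y)+v(\hat z)}{2}\le\max\{v(\hat y),v(\hat z)\}$. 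Hence from any vertex one follows a branch along which $v$ is nondecreasing, and the boundary hypothesis forces $v\le0$ there, so $v(\xi)\le0$.

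With these in hand I would verify the required properties of $\tilde{u}_g$. It is a subsolution, since for any pair and any $u\in\mathcal{B}(g)$ one has $u(x)\le\frac{u(y)+u(z)}{2}\le\frac{\tilde{u}_g(y)+\tilde{u}_g(z)}{2}$, and taking the supremum over $u$ preserves the bound. Boundedness and the boundary inequality \eqref{eq:bordecond} for $\tilde{u}_g$ follow from comparison against constant barriers, which are supersolutions: the constant $\sup g$ bounds every $u\in\mathcal{B}(g)$, and, fixing a branch $\pi$ with $t_0=\psi(\pi)$ and $\varepsilon>0$, continuity of $g$ at $t_0$ lets me pick a level $k_0$ so large that $g<g(t_0)+\varepsilon$ on every branch through the ancestor $x^{(k_0)}$ of $\pi$; applying the comparison principle on $T_{x^{(k_0)}}$ with $w\equiv g(t_0)+\varepsilon$ gives $u\le g(t_0)+\varepsilon$ there for all $u\in\mathcal{B}(g)$, hence $\tilde{u}_g\le g(t_0)+\varepsilon$ on $T_{x^{(k_0)}}$ and $\limsup_{x\to\pi}\tilde{u}_g(x)\le g(t_0)$. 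Next, $\tilde{u}_g$ satisfies \eqref{eq.tree.convex.II} with equality: were the inequality strict at some $x_0$, I could raise the value at $x_0$ up to $\min_{y\neq z}\frac{\tilde{u}_g(y)+\tilde{u}_g(z)}{2}$; this keeps the subsolution property at $x_0$ (now an equality) and at the predecessor $\hat{x}_0$ (where $x_0$ appears only on the larger side) and does not affect \eqref{eq:bordecond}, so the modified function lies in $\mathcal{B}(g)$ yet exceeds $\tilde{u}_g$ at $x_0$, a contradiction. Finally, any solution $v$ obeying \eqref{eq:bordecond} is in particular a subsolution obeying \eqref{eq:bordecond}, hence $v\in\mathcal{B}(g)$ and $v\le\tilde{u}_g$, so $\tilde{u}_g$ is the largest solution.

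The step I expect to be the main obstacle is establishing \eqref{eq:bordecond} for the envelope $\tilde{u}_g$: the difficulty is that $\limsup_{x\to\pi}u(x)\le g(\psi(\pi))$ holds for each competitor $u$ only eventually, with no control uniform in $u$, so one cannot pass to the supremum directly. The comparison principle together with the downward-locality of \eqref{eq.tree.convex.II}—which is precisely what makes constant barriers admissible on subtrees—supplies this uniformity, and continuity of $g$ is used here to make the constant barrier tight at $t_0$.
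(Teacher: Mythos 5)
Your proof is correct, and it follows the paper's overall skeleton --- one-step characterization of binary convexity (the paper's Lemma \ref{lema:biconvexeq}), a comparison principle, then a Perron-type bump argument --- but two of your key technical steps are implemented genuinely differently. First, the comparison principle: the paper (Lemma \ref{lema.bicompar.convexas}) perturbs by a constant to get a strict boundary inequality, asserts that the maximum of the difference is then attained at only finitely many vertices, and derives a contradiction by propagating the maximum to successors; the attainment/finiteness claim is left implicit and really needs a compactness argument on $\partial\T$. You avoid this entirely: evaluating both functions on the successor pair that realizes the minimum for the supersolution, you get $v(\xi)\le\max\{v(\hat y),v(\hat z)\}$ and follow a branch along which the difference is nondecreasing, so the boundary hypothesis kills it pointwise. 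This is more elementary, closes the gap in the paper's argument, and localizes naturally to descendant subtrees. Second, the boundary inequality \eqref{eq:bordecond} for the envelope: the paper builds explicit barriers $a+(b-a)u_{z_n}$ from the indicator-type functions of Lemma \ref{lema:bicaract} (with $a=2\max g$ chosen so the comparison works globally on $\T$), whereas you exploit the downward locality of \eqref{eq.tree.convex.II} to compare against a \emph{constant} barrier on the subtree of descendants of a deep ancestor of $\pi$, making Lemma \ref{lema:bicaract} unnecessary. A further benefit of your route: the paper's Theorem \ref{teo:bilimb} obtains the lower boundary bound by invoking the first notion of convexity (Remark \ref{re:cfisbcf} plus Theorem \ref{teo:limb}), so its treatment of the binary case leans on Sections \ref{sect-convex}--\ref{sect-convex-envelopes}; your argument proves only the one-sided bound \eqref{eq:bordecond}, which is all this theorem needs, and is self-contained. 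What the paper's heavier machinery buys in exchange is the full two-sided limit \eqref{eq:bordecond.77}, which is needed anyway for Theorem \ref{teo.convex.arbol.eq.II}.

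One caveat, which is a discrepancy in the paper rather than a flaw of yours: the statement says ``bounded'' datum, but your barrier step uses continuity of $g$ at $\psi(\pi)$. The paper's own proof does too (Theorem \ref{teo:bilargesol} and Theorem \ref{teo:bilimb} are stated for continuous $g$). Indeed, for merely bounded data the conclusion can fail: take $g\equiv 1$ except $g(t_0)=0$ at a single irrational $t_0$; competitors may equal $1$ off the branch through $t_0$ and drop from near $1$ to $0$ arbitrarily late along it (the one-step inequality only forces the deficit $1-u$ to at most double at each level), so $\tilde u_g\equiv 1$ and \eqref{eq:bordecond} fails at $t_0$. Upper semicontinuity of $g$ is what your argument (and the paper's) actually requires at this step.
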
	

    Again, when $g$ is continuous we have a unique solution 
    to the equation that attains the boundary datum continuously.
    \begin{teo} \label{teo.convex.arbol.eq.II} 
        Given a continuous boundary datum $g$, there exists a unique
        \eqref{eq.tree.convex.II} that verifies \eqref{eq:bordecond.77}.
    \end{teo}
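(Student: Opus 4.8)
The plan is to obtain existence by showing that the binary convex envelope $\tilde u_g$ furnished by Theorem \ref{teo.convex.arbol.II} already attains the datum continuously, and to obtain uniqueness from a comparison principle for \eqref{eq.tree.convex.II}. Recall that Theorem \ref{teo.convex.arbol.II} gives that $\tilde u_g$ is a solution of \eqref{eq.tree.convex.II} satisfying $\limsup_{x\to\pi}\tilde u_g(x)\le g(\psi(\pi))$, so one half of \eqref{eq:bordecond.77} is for free and it only remains to prove the matching lower bound $\liminf_{x\to\pi}\tilde u_g(x)\ge g(\psi(\pi))$.

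The decisive structural remark is that \eqref{eq.tree.convex.II} is a \emph{forward} equation: the value $u(x)$ is determined by the successors only, with no term at $\hat x$. Hence the value of any solution at a vertex $x$ depends on $g$ only through its restriction to the cylinder $C_x$ of branches passing through $x$, and $\psi(C_x)=[\psi(x),\psi(x)+m^{-|x|}]$ is an interval of length $m^{-|x|}$. The heart of the proof is the explicit lower barrier giving $\tilde u_g(x)\ge \min_{\psi(C_x)}g$. Set $c_0\defi\min_{[0,1]}g$ and $c_x\defi\min_{\psi(C_x)}g$, and define $\underline u$ to equal $c_x$ on the subtree rooted at $x$ (that is, at $x$ and all its descendants) and $c_0$ on the rest of $\T$. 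First I would check that $\underline u$ is a subsolution of \eqref{eq.tree.convex.II}, i.e.\ $\underline u(w)\le\tfrac12(\underline u(y)+\underline u(z))$ for every pair $y\neq z$ in $\S(w)$; iterating such an inequality over a binary subtree reproduces exactly the defining inequality \eqref{convex.II}, so $\underline u$ is binary convex. The only vertex where the inequality is not immediate is $\hat x$, whose successors are $x$ (carrying the larger value $c_x\ge c_0$) and the siblings of $x$ (carrying $c_0$); a one-line case distinction, using $c_x\ge c_0$ and, for $m\ge3$, that $\hat x$ has at least two successors outside the subtree of $x$, settles all $m\ge2$. Since $\underline u\le c_x\le g$ on $\psi(C_x)$ and $\underline u=c_0\le g$ elsewhere, also $\limsup_{w\to\pi}\underline u(w)\le g(\psi(\pi))$, so $\underline u\in\mathcal B(g)$ and therefore $\tilde u_g(x)\ge\underline u(x)=c_x=\min_{\psi(C_x)}g$.

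With this bound in hand, existence follows quickly. Given $\pi_0\in\partial\T$ with $t_0=\psi(\pi_0)$, as $x\to\pi_0$ one has $|x|\to\infty$, so $\psi(C_x)$ shrinks to $\{t_0\}$ and, by continuity of $g$, $\min_{\psi(C_x)}g\to g(t_0)$. Combined with the barrier bound this yields $\liminf_{x\to\pi_0}\tilde u_g(x)\ge g(t_0)$, and together with the upper bound from Theorem \ref{teo.convex.arbol.II} we conclude $\lim_{x\to\pi_0}\tilde u_g(x)=g(\psi(\pi_0))$, i.e.\ \eqref{eq:bordecond.77} holds.

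For uniqueness I would prove the comparison statement: if $u$ and $v$ both solve \eqref{eq.tree.convex.II} and $\lim_{x\to\pi}(v-u)(x)\le0$ for every $\pi\in\partial\T$, then $v\le u$ on $\T$. Suppose not, so $(v-u)(x_0)=\theta>0$ for some $x_0$. Let $z_1,z_2\in\S(x_0)$ realize the minimum for $u$, so $u(x_0)=\tfrac12(u(z_1)+u(z_2))$, while the equation for $v$ only gives $v(x_0)\le\tfrac12(v(z_1)+v(z_2))$; subtracting yields $(v-u)(x_0)\le\tfrac12\big((v-u)(z_1)+(v-u)(z_2)\big)$, so one of $z_1,z_2$, call it $x_1$, satisfies $(v-u)(x_1)\ge\theta$. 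Iterating produces a branch $x_0,x_1,x_2,\dots$ along which $v-u\ge\theta$; this branch converges to some $\pi^\ast\in\partial\T$, contradicting $\lim_{x\to\pi^\ast}(v-u)(x)\le0$. Applying this with the two solutions in both orders gives $u=v$; in particular any solution of \eqref{eq.tree.convex.II} attaining $g$ continuously coincides with $\tilde u_g$. I expect the main obstacle to be the lower barrier of the second paragraph: one must recognize that the forward structure localizes the problem to the cylinder $C_x$ and engineer a genuine subsolution (a naive ridge along the branch toward $\pi_0$ fails, as it is not binary convex) that is lifted only on the subtree of $x$.
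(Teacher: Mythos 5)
Your proof is correct, and while it shares the paper's overall skeleton (the binary convex envelope $\tilde u_g$ is the solution, barriers give the boundary attainment, a comparison argument gives uniqueness), both key steps are carried out by genuinely different arguments. For the boundary behaviour, the paper's Theorem \ref{teo:bilimb} obtains the lower bound $\liminf_{x\to\pi}\tilde u_g(x)\ge g(\psi(\pi))$ indirectly, from Remark \ref{re:cfisbcf} (convex implies binary convex, so $u^*_g\le\tilde u_g$) together with Theorem \ref{teo:limb} of Section \ref{sect-convex-envelopes}, and spends its effort on the upper bound, which it proves with supersolution barriers built from Lemma \ref{lema:bicaract} plus the comparison Lemma \ref{lema.bicompar.convexas}. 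You do the reverse: the upper bound is taken from the statement of Theorem \ref{teo.convex.arbol.II} (legitimate, since that result is proved independently of this one), and the lower bound comes from your explicit barrier $\underline u$; note that $\underline u$ is precisely the affine rescaling $c_0+(c_x-c_0)u_x$ of the indicator function $u_x$ of Lemma \ref{lema:bicaract}, so your existence argument is self-contained in the binary setting and never uses the results for the first notion of convexity. For uniqueness, the paper's Lemma \ref{lema.bicompar.convexas} adds a constant to make the boundary inequality strict, asserts that the maximum of $v-u$ is attained and only on finitely many vertices, and propagates the maximum to successors to contradict finiteness; you instead propagate a fixed gap $\theta>0$ along a single branch, using that the pair minimizing for one solution is admissible in the equation for the other, and contradict the boundary limit along that branch. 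Your comparison is more elementary and avoids the attainment and finiteness claims, which are the delicate points of the paper's lemma on an infinite tree; what the paper's route buys is economy, since its comparison lemma and the Section \ref{sect-convex-envelopes} machinery are reused verbatim across several theorems.
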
	
    
    In this case, written \eqref{eq.tree.convex.II} as
    \[
        0 = \min_{\substack{y,z\in\mathcal{S}(x)\\ y\neq z}} 
        \left\{ \frac12 
        u(y) + \frac12 u(z) - u(x) \right\},
    \]
    we can identify the analogous to the eigenvalues of the Hessian that for 
    this case are given by,
    \begin{equation} \label{uu.II}
        \left\{ 
            \frac12 u(x,i) + \frac12 u(x,j) - u(x) 
        \right\}_{i < j} .
    \end{equation}
    Then, adding the eigenvalues in \eqref{uu.II}, we obtain
    \begin{equation} \label{eq.Laplaciano.tree.II}
        0= \frac1{m}\sum_{y\in \S(x)} u(y) -  u(x).
    \end{equation}
    Notice that this is the usual Laplacian in the arborescence tree studied in \cite{KLW}.
    
    For this notion of convexity we can also deal with the problem of the convex envelope 
    of a function $f:\T \mapsto \mathbb{R}$ defined as in \eqref{convex-envelope-arbol.f}. In this case
    we also find that this convex envelope can be characterized as the solution to the obstacle problem
    for the associated equation, \eqref{eq.tree.convex.II}, and prove a theorem analogous to
    Theorem \ref{teo.convex.arbol.f} for this case. Once we have proved Theorem \ref{teo.convex.arbol.II}, the proof of this result is similar
    to the one of Theorem \ref{teo.convex.arbol.f} and hence we leave the statement and the proof to the reader.
    
    To end this introduction, let us give a natural generalization of  
    the notion of binary convexity. Given $k\in\{2,\dots,m-2\}$ and $x\in\T,$ 
    $\mathbb{T}_{2}^{x k}$ denotes the set of all subgraphs $\mathbb{B}$ that are formed from a finite subset 
	of vertices in $\T$ and such that, $x\in\mathbb{B},$ 
	$\S(x)\cap\mathbb{B}$ has exactly $k$ elements and
	for any $y\in\mathbb{B}\setminus\{x\},$ 
	\begin{itemize}
	    \item $|y|>|x|;$
	    \item either $\S(y)\cap\mathbb{B}=\emptyset$  
	        or  $\S(y)\cap\mathbb{B}$ has exactly $k$ elements.
	\end{itemize}    
    Observe that $\mathbb{T}_{2}^{x 2}=\mathbb{T}_{2}^{x}.$
    As before we denote by $\mathcal{E}(\mathbb{B})$ (the set of endpoints) the set of points 
    $y\in \mathbb{B}$ such that $\S(y)\cap\mathbb{B}=\emptyset.$ 
    
    A function $u:\T \to \mathbb{R}$ is called $k-$convex if
    for any $x\in\T$
    \[
        u(x) \leq \sum_{y\in\mathcal{E}(\mathbb{B})}\dfrac1{k^{|y|-|x|}} u(y)
        \quad \forall \mathbb{B}\in\mathbb{T}_{2}^{x k}.
    \]
        
    As before, associated to this version of convexity, we have a convex envelope 
    of a bounded boundary datum $g$ that we will call the $k-$convex envelope of
    $g.$ Following the proof of Theorems \ref{teo.convex.arbol.II} and 
	\ref{teo.convex.arbol.eq.II}, we can show that the $k-$convex envelope 
	of $g$ is the largest solution to
        \begin{equation}\label{eq.tree.convex.III}
                u(x) = 
                \min_{\substack{x_1,\dots,x_k\in\mathcal{S}(x)\\ x_i\neq x_j}}
              \frac1k\sum_{i=1}^k u(x_i)   \quad\text{on }\T,
        \end{equation}
    that verifies \eqref{eq:bordecond}. 
    Moreover, if $g$ is a continuous function then the $k-$convex envelope 
	of $g$ is the a unique solution to \eqref{eq.tree.convex.III} that verifies 
	\eqref{eq:bordecond.77}.

    In this case, written \eqref{eq.tree.convex.III} as
    \[
        0 = \min_{\substack{x_1,\dots,x_k\in\mathcal{S}(x)\\ x_i\neq x_j}}
        \left\{ \frac1k\sum_{i=1}^k 
        u(x_i)  - u(x) \right\},
    \]
    we identify the analogous to the eigenvalues of the Hessian,
    \begin{equation} \label{uu.III}
        \left\{ 
         \frac1k\sum_{i=1}^k 
        u(x,j_i)  - u(x),
        \right\}_{j_i < j_{i+1}}.
    \end{equation}
    Adding the eigenvalues in \eqref{uu.III}, we obtain again
    \eqref{eq.Laplaciano.tree.II}, the usual Laplacian on the arborescence tree.
    
    \bigskip

    {\bf Organization of the paper.}  
        In Section \ref{sect-convex}, 
        we will prove general results for convex functions;
        In Section \ref{sect-convex-envelopes}, we prove
        our main result for the convex envelope; In Sections
        \ref{section.biconvfunction} and \ref{sect-biconvex-envelopes}, 
        we extend the results of previous sections to the notion of binary convexity.

\section{Convex functions}\label{sect-convex}
    We begin this section showing a different characterization of convex functions.
    
    \begin{lem}\label{lema:convexeq}
        A function $u$ on the tree is convex 
        if and only if $u$ satisfies         
        \begin{equation} \label{subsol1}
            u (x)  \le \min 
			\left\{ 
					\min_{\substack{y,z\in\mathcal{S}(x)\\ y\neq z}}
					\frac{u(y) +u(z)}2  ; 
					\min_{y\in\mathcal{S}(x)}  
					\frac{ u(\hat{x}) +  m u(y)}{m+1} 
			\right\} \qquad \forall x \in \T.
        \end{equation}
    \end{lem}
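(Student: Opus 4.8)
The plan is to prove the two implications separately, the key observation being that the right-hand side of \eqref{subsol1} encodes exactly the defining chord inequality of convexity applied to the two possible local configurations of three consecutive vertices on a minimal path.

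For the direct implication, I would assume $u$ is convex and specialize the chord inequality to two elementary segments. First, fix $x\in\T$ with $|x|=k$ and pick two distinct successors $y,z\in\mathcal{S}(x)$; since the minimal path $[y,z]$ passes through $x$ and $d(x,y)=d(x,z)=m^{-(k+1)}$, $d(y,z)=2m^{-(k+1)}$, convexity applied with $x$ the midpoint of $[y,z]$ yields $u(x)\le \tfrac12 u(y)+\tfrac12 u(z)$. Second, for $x$ not the root and $y\in\mathcal{S}(x)$, the minimal path $[\hat x,y]$ passes through $x$ and $d(\hat x,x)=m^{-k}$, $d(x,y)=m^{-(k+1)}$, $d(\hat x,y)=(m+1)m^{-(k+1)}$, so convexity gives $u(x)\le \tfrac{1}{m+1}u(\hat x)+\tfrac{m}{m+1}u(y)$. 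Taking the minimum over all admissible $y,z$ produces \eqref{subsol1}, with the convention that at the root $x=\emptyset$ only the first term appears.

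For the converse the point is to propagate the local inequality \eqref{subsol1} along an arbitrary minimal path. Given $x,y,z\in\T$ with $z\in[x,y]$, write $[x,y]$ as a sequence of consecutive vertices $v_0=x,v_1,\dots,v_n=y$ and set $t_i\defi d(x,v_i)$; since $[x,y]$ is a geodesic the edge lengths add up, so $0=t_0<t_1<\dots<t_n=d(x,y)$. The crucial remark is that \eqref{subsol1} implies the chord inequality for each interior consecutive triple $(v_{i-1},v_i,v_{i+1})$: at such a vertex the two path-neighbors are either two successors of $v_i$ (the ``valley'' at the deepest common ancestor of $x$ and $y$, matching the first term of the minimum) or the predecessor $\hat v_i$ together with one successor (on the ascending/descending part, matching the second term); in both cases the distance computation of the previous paragraph shows that the relevant term in \eqref{subsol1} equals $\tfrac{t_{i+1}-t_i}{t_{i+1}-t_{i-1}}u(v_{i-1})+\tfrac{t_i-t_{i-1}}{t_{i+1}-t_{i-1}}u(v_{i+1})$, and since the minimum in \eqref{subsol1} is dominated by this term we obtain $u(v_i)\le \tfrac{t_{i+1}-t_i}{t_{i+1}-t_{i-1}}u(v_{i-1})+\tfrac{t_i-t_{i-1}}{t_{i+1}-t_{i-1}}u(v_{i+1})$. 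Thus each planar point $(t_i,u(v_i))$ lies below the chord joining its two neighbors.

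It then remains to upgrade this ``below the chord of adjacent neighbors'' property to ``below the chord joining the two endpoints'', which I would do by the standard one-dimensional argument: the piecewise linear interpolation $\phi$ of the points $(t_i,u(v_i))$ has nondecreasing slopes (this is precisely the consecutive-triple inequality), hence $\phi$ is convex on $[0,t_n]$, so $\phi(t_i)\le \tfrac{t_n-t_i}{t_n}\phi(0)+\tfrac{t_i}{t_n}\phi(t_n)$ for every $i$. Reading this at $v_i=z$ and using $\phi(t_i)=u(v_i)$ together with $t_i=d(x,z)$, $t_n-t_i=d(z,y)$, $t_n=d(x,y)$ recovers exactly the convexity inequality for $z\in[x,y]$. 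I expect the main obstacle to be this converse direction: one must carefully check that every interior vertex of a general minimal path falls into one of the two local configurations above (noting that at the root only the ``valley'' case occurs, consistently with the convention for \eqref{subsol1} at $x=\emptyset$) and that the weights delivered by \eqref{subsol1} agree with the geodesic-length weights $t_i$; once this bookkeeping is in place, the reduction to convexity of a one-variable piecewise linear function is routine.
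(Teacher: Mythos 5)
Your proposal is correct, and the forward implication coincides with the paper's proof (same two specializations of the chord inequality, same distance computations). For the converse, however, you take a genuinely different route. The paper proceeds in two stages: first an induction on the number of vertices to establish the chord inequality along ``straight line'' paths (those in which each vertex is the predecessor of the previous one), with an explicitly algebraic inductive step, and then a gluing argument at the unique turning vertex $z$ where $[x,y]$ changes direction, using the valley inequality $2u(z)\le u(w_1)+u(w_2)$ to combine the two straight pieces. You instead parametrize $[x,y]$ by cumulative edge length $t_i=d(x,v_i)$, observe that \eqref{subsol1} delivers exactly the three-point chord inequality at every interior vertex --- the first term of the minimum in the valley configuration, the second term in the ascending/descending configuration, with the coefficients $\tfrac12,\tfrac12$ and $\tfrac1{m+1},\tfrac m{m+1}$ matching the distance ratios --- and then invoke the standard one-dimensional fact that a piecewise linear interpolation whose point lies below the chord of its neighbors at each node has nondecreasing slopes, hence is convex, hence lies below the chord of the endpoints. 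Both arguments rest on the same local computation identifying the weights in \eqref{subsol1} with geodesic-length ratios, but your reduction to one-variable convexity replaces the paper's two-stage induction and its somewhat delicate chains of inequalities (which, as printed, contain typographical slips such as $u(x_N)$ appearing where $u(x_0)$ is intended) by a single clean local-to-global step; what the paper's version buys in exchange is a proof entirely self-contained in the tree metric, not appealing to any exterior fact about functions on an interval. Your case analysis is also complete: on a minimal path the two path-neighbors of an interior vertex are necessarily either two distinct successors or the predecessor together with one successor, and the root can only occur as an interior vertex in the valley configuration, so the convention that only the first term of \eqref{subsol1} is present at $\emptyset$ causes no loss.
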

    
    \begin{proof}
        Let $u$ be a convex function. Our goal is to show that \eqref{subsol1} holds.
        Given $x$ for any $y,z\in\S(x)$ with $y\neq z$
        we have that
        \[
            u(x)\le\dfrac12 u(y)+\dfrac12 u(z)
        \]
        due to the fact that $u$ is a convex function (just take $x\in[y,z],$
        $d(y,z)=\tfrac2{m^{|x|+1}},$ and
        $d(y,x)=d(z,x)=\tfrac1{m^{|x|+1}}$ in the definition of convexity).
        Then, we get 
        \[
            u (x)  \le \min_{\substack{y,z\in\mathcal{S}(x)\\ y\neq z}}
            \frac{u(y) +u(z)}2
        \]
        for any $x\in\T.$
        
        Now, given $x\in \T$ for any $y\in \S(x)$
        \[
            u(x)\le\dfrac{m}{m+1} u(y)+\dfrac{1}{m+1} u(\hat{x})
        \]
        again due to the fact that $u$ is a convex function (in this case, take $x\in[\hat{x},y],$
        $d(\hat{x},y)=\tfrac{m+1}{m^{|x|+1}},$ 
        $d(\hat{x},x)=\tfrac{1}{m^{|x|}},$
        and
        $d(y,x)=\tfrac1{m^{|x|+1}}$). Thus
        \[
            u (x)  \le  
					\min_{y\in\mathcal{S}(x)}  
					\frac{ u(\hat{x}) + m u(y)}{m+1}
		\]
        for any $x\in\T.$
        Therefore, we have that if $u$ is a convex function then
        $u$ satisfies \eqref{subsol1}.    
        
        To see the converse, let $u$  be a function defined on the tree that verifies \eqref{subsol1} at every node and $x,y\in \T.$
        We begin by analyzing the case that $[x,y]$ is a ``straight 
        line", that is the vertices $x_0,\dots,x_N$ of $[x,y]$ 
        are such that $x_0=x,$ $x_N=y,$  $\hat{x}_i=x_{i+1}$ 
        for any $i\in\{0,\dots,N-1\}.$  More precisely, first
        we show that if $[x,y]$ is a ``straight 
        line" then
        \begin{equation}
            \label{eq:auxenvol1}
            u(x_i)\le \dfrac{d(x_i,x_0)}{d(x_N,x_0)} u(x_N) +
            \dfrac{d(x_i,x_N)}{d(x_N,x_0)} u(x_0) \quad 
            \forall i\in\{0,\dots,N\}. 
        \end{equation}
        We proceed by induction in $N$. When $N=2,$ by \eqref{subsol1}, we
        have
        \[
            u(x_1)\le\dfrac{u(x_2)+mu(x_0)}{m+1}=
            \dfrac{d(x_1,x_0)}{d(x_2,x_0)} u(x_2) +
            \dfrac{d(x_1,x_2)}{d(x_2,x_0)} u(x_0) 
        \]
        since $d(x_1,x_2)=\tfrac{1}{m^{|x_1|}},$
        $d(x_1,x_0)=\tfrac{1}{m^{|x_0|}}=\tfrac{1}{m^{|x_1|+1}}$
        and $d(x_2,x_0)=\tfrac{m+1}{m^{|x_1|+1}}.$
        
        Suppose now that \eqref{eq:auxenvol1} is true for all
        straight line that have $N-1$ vertices, where $N>2.$
        Then,
       \[
            u(x_1)\le 
                \dfrac{d(x_1,x_{N-1})}{d(x_{N-1},x_0)} u(x_0)
                +\dfrac{d(x_1,x_{0})}{d(x_{N-1},x_0)} u(x_{N-1})
        \]
                and
        \[
            u(x_{N-1})\le 
                \dfrac{d(x_1,x_{N-1})}{d(x_{N},x_1)} u(x_N)
                +\dfrac{d(x_{N},x_{N-1})}{d(x_{N},x_1)} u(x_{1}).
        \]  
        Thus,
        \begin{align*}
            u(x_1)\le \dfrac{d(x_1,x_{N-1})}{d(x_{N-1},x_0)}u(x_0)
            &+\dfrac{d(x_1,x_{0})}{d(x_{N-1},x_0)}
            \dfrac{d(x_1,x_{N-1})}{d(x_{N},x_1)} u(x_N)\\
            & \qquad + \dfrac{d(x_1,x_{0})}{d(x_{N-1},x_0)}
            \dfrac{d(x_{N},x_{N-1})}{d(x_{N},x_1)} u(x_{1}).
        \end{align*}
        Therefore,
        \begin{align*}
            &d(x_1,x_{N-1})\left[d(x_1,x_{N})u(x_0)
            +d(x_1,x_0)u(x_N)\right]\\[10pt] & \ge \left[d(x_{N-1},x_0)d(x_1,x_N)-d(x_1,x_0)d(x_{N-1},x_N)
            \right]u(x_1)\\[10pt]
            &\ge\left[
                \left\{
                    d(x_{N},x_0)-d(x_N,x_{N-1})
                \right\}
                \left\{
                    d(x_N,x_0)-d(x_1,x_0)
                \right\}
                -d(x_1,x_0)d(x_{N-1},x_N)
            \right]u(x_1)\\[10pt]
            &\ge d(x_{N},x_0)
                \left[ d(x_{N},x_0)
                    -d(x_N,x_{N-1})-d(x_1,x_0)
                \right]u(x_1)\\[10pt]
            &\ge d(x_{N},x_0)d(x_{1},x_{N-1})u(x_1).
        \end{align*}
        Then 
        \[
            u(x_1)\le \dfrac{d(x_1,x_0)}{d(x_N,x_0)} u(x_N) +
            \dfrac{d(x_1,x_N)}{d(x_N,x_0)} u(x_N).
        \]
        
        In similar manner, we get
        \begin{equation}
            \label{eq:auxenvol2}
                u(x_{N-1})\le \dfrac{d(x_{N-1},x_0)}{d(x_{N},x_0)} 
                u(x_N) +\dfrac{d(x_{N-1},x_N)}{d(x_N,x_0)} u(x_N).
        \end{equation}
        
        If $z\in [x,y]\setminus\{x_0,x_1,x_{N-1},x_N\},$ by
        the inductive hypothesis and \eqref{eq:auxenvol2}, we have
        \begin{align*}
            u(z)&\le\dfrac{d(z,x_{N-1})}{d(x_{N-1},x_0)} u(x_0)
            +\dfrac{d(z,x_{0})}{d(x_{N-1},x_0)} u(x_{N-1})\\[10pt]
            &\le \dfrac{d(z,x_0)}{d(x_N,x_0)}u(x_N)
            +\dfrac{d(z,x_{N-1})d(x_N,x_0)+d(z,x_0)d(x_{N-1},x_N)}
            {d(x_{N-1},x_0)d(x_N,x_0)}u(x_0)\\[10pt]
            &\le \dfrac{d(z,x_0)}{d(x_N,x_0)}u(x_N)\\[10pt]
            &\quad +\dfrac{\left[d(z,x_{N})-d(x_{N-1},x_N)\right]d(x_N,x_0)+
            \left[d(x_N,x_0)-d(z,x_N)\right]d(x_{N-1},x_N)}
            {d(x_{N-1},x_0)d(x_N,x_0)}u(x_0)\\[10pt]
            &\le \dfrac{d(z,x_0)}{d(x_N,x_0)}u(x_N)
            +d(z,x_N)
            \dfrac{d(x_N,x_0)-d(x_{N-1},x_N)}
            {d(x_{N-1},x_0)d(x_N,x_0)}u(x_0)\\[10pt]
            &\le 
            \dfrac{d(z,x_0)}{d(x_N,x_0)}u(x_N)
            +\dfrac{d(z,x_{N})}{d(x_N,x_0)}u(x_0),
        \end{align*}
      showing that indeed \eqref{eq:auxenvol1} holds
        when $[x,y]$ is a straight line.
        
        When $[x,y]$ is not a straight line, there is
        a $z\in[x,y]$ such that $[x,z]$ and $[z,y]$ are 
        straight lines. Observe that $[x,y]=[x,z]\cup[z,y]$ and 
        $\S(z)\cap[x,y]=\{w_1,w_2\}.$ We can assume that
        $w_1\in [x,z]$ and $w_2\in[z,y].$
        
        Thus, from \eqref{eq:auxenvol1}, we have
        \begin{align*}
            2u(z)&\le u(w_1)+ u(w_2)\\
            &\le \left[\dfrac{d(w_1,x)}{d(x,z)}
                +\dfrac{d(w_2,y)}{d(y,z)}
            \right]u(z)+\dfrac{d(w_1,z)}{d(x,z)}u(x)
            +\dfrac{d(w_2,z)}{d(y,z)} u(y).
        \end{align*}
        Then,
        \[
                \dfrac{2d(x,z)d(y,z)-d(w_1,x)d(z,y)-d(w_2,y)d(z,x)}
                {d(z,x)d(z,y)}u(z)\le
            \dfrac{d(w_1,z)}{d(x,z)}u(x)
            +\dfrac{d(w_2,z)}{d(y,z)} u(y).
        \]
        Since $d(w_1,z)=d(w_2,z),$ we get
        \begin{align*}
            d(w_1,z)&\left[d(y,z)u(x)+d(x,z) u(y)\right]\\[10pt]
            &\ge
            \left[
                2d(x,z)d(y,z)-d(w_1,x)d(z,y)-d(w_2,y)d(z,x)
            \right]u(z)\\[10pt]
            &\ge
            \left\{
                2d(x,z)d(y,z)-\left[d(x,z)-d(w_1,z)\right]d(z,y)-
                \left[d(y,z)-d(w_2,z)\right]d(z,x)
            \right\}u(z)\\[10pt]
            &\ge d(w_1,z)\left[d(z,y)+d(z,x)\right] u(z)\\
            &\ge d(w_1,z)d(x,y) u(z).
        \end{align*}
        Therefore, we obtain
        \[
            u(z)\le \dfrac{d(x,z)}{d(x,y)}u(y)+\dfrac{d(y,z)}{d(x,y)}u(x).
        \]
        
        If $w\in[x,y]\setminus\{x,z,y\}$ then $w\in[x,z]$ or 
        $w\in[z,y].$ In the case that $w\in[x,z],$ since
        $[x,z]$ is a straight line we have
        \begin{align*}
            u(w)&\le \dfrac{d(x,w)}{d(x,z)}u(z)+\dfrac{d(z,w)}{d(x,z)}u(x)\\[10pt]
            &\le \dfrac{d(x,w)}{d(x,y)}u(y)+
            \left[\dfrac{d(x,w)d(y,z)}{d(x,z)d(x,y)}+\dfrac{d(z,w)}{d(x,z)}
            \right]u(x)\\[10pt]
            &\le \dfrac{d(x,w)}{d(x,y)}u(y)+
            \dfrac{d(x,w)d(y,z)+d(z,w)d(x,y)}{d(x,z)d(x,y)}u(x)\\[10pt]
            &\le \dfrac{d(x,w)}{d(x,y)}u(y)+
            \dfrac{\left[d(x,y)-d(y,w)\right]d(y,z)+
            d(z,w)d(x,y)}{d(x,z)d(x,y)}u(x)\\[10pt]
            &\le \dfrac{d(x,w)}{d(x,y)}u(y)+
            \dfrac{\left[d(y,z)+d(z,w)\right]d(x,y)-
            d(y,w)d(y,z)}{d(x,z)d(x,y)}u(x)\\[10pt]
            &\le \dfrac{d(x,w)}{d(x,y)}u(y)+
            \dfrac{d(y,w)\left[d(x,y)-d(y,z)\right]}{d(x,z)d(x,y)}u(x)\\[10pt]
            &\le \dfrac{d(x,w)}{d(x,y)}u(y)+
            \dfrac{d(y,w)}{d(x,y)}u(x).
        \end{align*}
        In the case that $w\in [z,y]$ the proof is similar.

        Therefore, we conclude that a function $u$ that verifies  
        \eqref{subsol1} is a convex function in $\T$.
    \end{proof}
    
    Our second result show that the sum of convex function is also 
    a convex function.
    
    \begin{co}\label{co:sumconv}
        Let $u,v\colon \T\to\mathbb{R}$ be convex functions. Then 
        $u+v$ is a convex function.
    \end{co}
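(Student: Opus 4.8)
The plan is to invoke the local characterization of convexity just established in Lemma~\ref{lema:convexeq}, so that checking convexity of $u+v$ reduces to verifying the single inequality \eqref{subsol1} at every vertex. Since $u$ and $v$ are convex, Lemma~\ref{lema:convexeq} gives that both satisfy \eqref{subsol1} at each $x\in\T$; the task is then to deduce the same inequality for $u+v$, after which a second application of the lemma yields the claim.

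First I would record the structural observation that the right-hand side of \eqref{subsol1} is a minimum of a family of expressions indexed by a fixed set that does not depend on the function: one value $\frac{u(y)+u(z)}{2}$ for each pair of distinct successors $y,z\in\S(x)$, together with one value $\frac{u(\hat{x})+m\,u(y)}{m+1}$ for each successor $y\in\S(x)$. Consequently, for the function $u+v$ each candidate value is exactly the sum of the corresponding candidate values for $u$ and for $v$; for instance $\frac{(u+v)(y)+(u+v)(z)}{2}=\frac{u(y)+u(z)}{2}+\frac{v(y)+v(z)}{2}$, and likewise for the predecessor terms.

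The only point requiring care is that the minimizing index for $u$ need not coincide with that for $v$. This is handled by the elementary inequality $\min_i a_i+\min_i b_i\le\min_i(a_i+b_i)$, valid for any two families indexed by a common set, which follows at once from $\min_i a_i\le a_j$ and $\min_i b_i\le b_j$ for every $j$. Combining it with the convexity inequalities $u(x)\le\min_i a_i$ and $v(x)\le\min_i b_i$ gives
\[
(u+v)(x)=u(x)+v(x)\le\min_i a_i+\min_i b_i\le\min_i(a_i+b_i),
\]
and the last quantity is precisely the right-hand side of \eqref{subsol1} written for $u+v$. Thus $u+v$ satisfies \eqref{subsol1} at every $x\in\T$, and Lemma~\ref{lema:convexeq} finishes the argument.

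I expect no genuine obstacle here: the argument is essentially the subadditivity of $\min$ together with the additive form of the candidate values. As a sanity check one may bypass the lemma entirely and argue straight from the definition: for $z\in[x,y]$ the coefficients $\frac{d(y,z)}{d(x,y)}$ and $\frac{d(x,z)}{d(x,y)}$ are identical for $u$ and $v$, so adding the two defining convexity inequalities term by term immediately produces the convexity inequality for $u+v$.
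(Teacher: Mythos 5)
Your proof is correct and takes essentially the same route as the paper: both arguments apply Lemma~\ref{lema:convexeq} in one direction to each of $u$ and $v$, combine the resulting inequalities via the subadditivity of the minimum over the common index set (pairs of distinct successors, plus predecessor--successor pairs), and then apply the lemma in the reverse direction to conclude that $u+v$ is convex. Your closing remark, that one could bypass the lemma entirely and add the defining convexity inequalities directly since the coefficients $\frac{d(y,z)}{d(x,y)}$ and $\frac{d(x,z)}{d(x,y)}$ do not depend on the function, is also valid and is in fact even shorter than the paper's argument, though the paper does not take that route.
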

    \begin{proof}
        Since $u,v$ are convex function, by Lemma \ref{lema:convexeq},
        for any $x\in\T$ we have that
        \begin{align*}
            &u (x) + v(x)  \le
            \min \left\{ 
					\min_{\substack{y,z\in\mathcal{S}(x)\\ y\neq z}}
					\frac{u(y) +u(z)}2  ; 
					\min_{y\in\mathcal{S}(x)}  
					\frac{  u(\hat{x}) + m u(y)}{m+1} 
			\right\}\\
			&\hspace{5cm}+\min \left\{ 
					\min_{\substack{y,z\in\mathcal{S}(x)\\ y\neq z}}
					\frac{v(y) +v(z)}2  ; 
					\min_{y\in\mathcal{S}(x)}  
					\frac{  v(\hat{x}) + m v(y)}{m+1} 
			\right\}\\
			&\le\min \left\{ 
					\min_{\substack{y,z\in\mathcal{S}(x)\\ y\neq z}}
					\frac{u(y) +u(z)}2+\frac{v(y) +v(z)}2  ; 
					\min_{y\in\mathcal{S}(x)}  
					\frac{  u(\hat{x}) + m u(y)}{m+1}+
					\frac{  v(\hat{x}) + m v(y)}{m+1} 
			\right\}.
        \end{align*}
        Therefore,  by Lemma \ref{lema:convexeq}, $u+v$ is a convex function.
    \end{proof}
    
    It is immediate to check that the constant function $u=1$ is a convex
    function such that 
    \[
        \lim_{x\to\pi}u(x)=\CChi_{[0,1]}(\psi(\pi)) \quad
        \forall\pi\in\partial\T.
    \]
    
    We now show that for any $x_0\in\T\setminus\{\emptyset\}$ 
    there is a convex function $u$ such that 
    \[
        \limsup_{x\to\pi}u(x)\le \CChi_{I_{x_0}}
        (\psi(\pi)) \quad\pi\in\partial\T.
    \]
    Here $I_{x_0}$ is the interval associated to the vertex $x_0$ of length $\tfrac{1}{m^{|x_0|}}$ 
    given by
    \[
        I_{x_0}\coloneqq\left[\psi(x_0),\psi(x_0)+\frac1{m^{|x_0|}}\right].
    \]
    Observe that for $x_0\in \T$, $I_{x_0} \cap \partial\T$ is the
    subset of $\partial\T$ consisting of all branches that pass through $x_0$.

    To find such a convex function we introduce the following set: given $x_0\in\T$, let us consider 
    \[
        \T^{x_{0}}\coloneqq
        \{x\in\T \colon |x|\ge |x_0|,I_x\subset I_{x_0}\}.
    \]
    
    \begin{lem}\label{lema:caract}
        Let $x_0\in\T\setminus\{\emptyset\}.$ 
        Then the function $u_{x_0}\colon\T\to\mathbb{R}$
        \[
            u_{x_0}(x)\coloneqq \frac{m-1}m
            \begin{cases}
                0 &\text{if } x\not\in\T^{x_0},\\
                \displaystyle\sum_{i=0}^{|x|-|x_0|}\frac1{m^i}
                &\text{if } x\in\T^{x_0},
            \end{cases}
        \]
        is a convex function such that
        \[
            \limsup_{x\to\pi}u_{x_0}(x)
            \le
            \CChi_{I_{x_0}}(\psi(\pi)) \quad\forall
            \pi\in\partial\T.    
        \]
        Moreover, 
        \[
            u_{x_0}(x)=\min 
			    \left\{ 
					\min_{\substack{y,z\in\mathcal{S}(x)\\ y\neq z}}
					\frac{u_{x_0}(y) +u_{x_0}(z)}2  ; 
					\min_{y\in\mathcal{S}(x)}  
					\frac{ u_{x_0}(\hat{x}) +  m u_{x_0}(y)}{m+1} 
			   \right\} \quad\forall x\in\T ,
		\]
        and  for any $\pi\in\T$ such that $\psi(\pi)$ is not one of the two
        endpoints of $I_{x_0}$ we have
        \[
            \lim_{x\to\pi}u_{x_0}(x)
            =\CChi_{I_{x_0}}(\psi(\pi)).    
        \]
    \end{lem}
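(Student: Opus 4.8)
The plan is to reduce everything to a transparent closed form for $u_{x_0}$, verify the mean value identity by a short case analysis, deduce convexity from Lemma~\ref{lema:convexeq}, and only then read off the boundary behaviour. First I would observe that, for $|x|\ge|x_0|$, the inclusion $I_x\subset I_{x_0}$ holds precisely when $x_0$ is an initial segment of $x$; hence $\T^{x_0}$ is exactly $x_0$ together with all of its descendants. Summing the geometric series, for $x\in\T^{x_0}$ with $k\defi|x|-|x_0|$ one gets the clean formula
\[
u_{x_0}(x)=1-\frac{1}{m^{k+1}},
\]
while $u_{x_0}(x)=0$ off $\T^{x_0}$. In particular $u_{x_0}(x_0)=\tfrac{m-1}{m}$, the values increase to $1$ along any branch descending from $x_0$, and $0\le u_{x_0}<1$ everywhere.

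Next I would check the asserted identity pointwise, distinguishing the position of $x$ relative to $x_0$. If $x$ is a strict descendant of $x_0$, say $k\ge1$, then $u_{x_0}(\widehat{x_0}$-type predecessor$)=u_{x_0}(\hat x)=1-m^{-k}$ and every $y\in\S(x)$ has $u_{x_0}(y)=1-m^{-(k+2)}$; a direct computation gives $\frac{u_{x_0}(\hat x)+m\,u_{x_0}(y)}{m+1}=1-m^{-(k+1)}=u_{x_0}(x)$, whereas $\frac{u_{x_0}(y)+u_{x_0}(z)}2=1-m^{-(k+2)}$ is strictly larger, so the minimum is attained by the through-the-predecessor term. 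If $x=x_0$ the same computation works with $u_{x_0}(\widehat{x_0})=0$ and again yields equality through the predecessor term. If $x=\widehat{x_0}$ has itself a predecessor, then $u_{x_0}(x)=0$, exactly one successor (namely $x_0$) is positive while all the other successors and $\hat x$ vanish, so the through-the-predecessor term, minimized over a vanishing successor, equals $0$. For every remaining $x$ the function vanishes on $\{\hat x,x\}\cup\S(x)$ and both candidate minima are $0$. In each case $\min\{\cdots\}=u_{x_0}(x)$, which is the identity; in particular it forces the inequality \eqref{subsol1}, so $u_{x_0}$ is convex by Lemma~\ref{lema:convexeq}.

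Finally I would analyse the trace. Since $u_{x_0}<1$, the bound $\limsup_{x\to\pi}u_{x_0}(x)\le\CChi_{I_{x_0}}(\psi(\pi))$ is automatic whenever $\psi(\pi)\in I_{x_0}$; when $\psi(\pi)\notin I_{x_0}$, the point $\psi(\pi)$ sits at positive distance from the closed interval $I_{x_0}$, so every vertex close enough to $\pi$ lies outside $\T^{x_0}$ and there $u_{x_0}=0$, giving $\limsup=0$. The same dichotomy yields the exact limit at non-endpoints: if $\psi(\pi)$ lies in the interior of $I_{x_0}$ the branch ultimately enters and stays in $\T^{x_0}$, so $u_{x_0}(x)=1-m^{-(|x|-|x_0|+1)}\to1=\CChi_{I_{x_0}}(\psi(\pi))$ as $|x|\to\infty$, while if $\psi(\pi)\notin I_{x_0}$ one gets the value $0=\CChi_{I_{x_0}}(\psi(\pi))$. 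At the two endpoints these two behaviours collide (limit $1$ from inside, $0$ from outside), which is exactly why they are excluded.

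The individual computations are elementary arithmetic; the points that genuinely need care are, first, recognizing that inside $\T^{x_0}$ it is the predecessor term and not the sibling term that is active, so that $u_{x_0}$ is affine along the geodesic issuing from $\widehat{x_0}$ out through $\T^{x_0}$, and, second, the degenerate root configuration. At $x=\emptyset$ only the sibling term $\min_{y\neq z}\frac{u_{x_0}(y)+u_{x_0}(z)}2$ is present, so the identity requires two successors of the root to carry the value $0$; this is automatic unless $m=2$ and $|x_0|=1$, the single borderline case that must be inspected by hand (there the convexity inequality \eqref{subsol1} still holds, since $0\le\tfrac14$).
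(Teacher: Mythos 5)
Your proof is correct, and its skeleton is the same as the paper's: verify the mean value relation vertex by vertex, conclude convexity via Lemma \ref{lema:convexeq}, and then read off the boundary behaviour by separating the interior of $I_{x_0}$, the complement of $I_{x_0}$, and the endpoints. The closed form $u_{x_0}(x)=1-m^{-(|x|-|x_0|+1)}$ on $\T^{x_0}$ only streamlines the arithmetic; it is not a structural difference.

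The genuinely different (and more careful) part of your argument is the treatment of the root. The paper disposes of every $x\notin\T^{x_0}$ in one stroke by asserting that there always exist two distinct successors $y,z\in\S(x)$ with $u_{x_0}(y)=u_{x_0}(z)=0$; this assertion is false when $m=2$ and $x=\widehat{x_0}$, since then $\S(x)$ consists of $x_0$ and exactly one other vertex. For $x=\widehat{x_0}\neq\emptyset$ the conclusion survives anyway through the predecessor term, exactly as in your computation; but in the configuration you isolate, $m=2$ and $|x_0|=1$, the identity at $x=\emptyset$ (which, by the convention stated in the introduction, involves only the pair term) genuinely fails:
\[
u_{x_0}(\emptyset)=0\neq\frac14=\min_{\substack{y,z\in\S(\emptyset)\\ y\neq z}}\frac{u_{x_0}(y)+u_{x_0}(z)}{2}.
\]
So the ``Moreover'' identity of the lemma, asserted for all $x\in\T$, is incorrect in this single borderline case, while the inequality \eqref{subsol1} --- and hence convexity and both boundary statements --- persist, as you observe. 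You should say explicitly that in this case you prove the lemma with the identity weakened to \eqref{subsol1} at the root. The slip does not propagate to the rest of the paper: where the lemma is invoked (the proof of Theorem \ref{teo:limb}) one may simply take the vertices $z_n$ of level $n\ge2$, and in any case only the supersolution inequality for $w^n=a+(b-a)u_{z_n}$ is needed there, which survives because $b-a\le0$.
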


    \begin{proof}
        Let us start by showing  that the function
        $u_{x_0}$ is convex.  By Lemma \eqref{lema:convexeq},
        it is enough to show that $u_{x_0}$ satisfies \eqref{subsol1}.
        If $x\in \T\setminus \T^{x_0}$ then there exist 
        $y,z\in \S(x)$ such that $y\neq z,$ $u_{x_0}(y)=u_{x_0}(z)=0$. 
        So, we have
        \begin{equation}
            \label{eq:caract1}
               \min 
			    \left\{ 
					\min_{\substack{y,z\in\mathcal{S}(x)\\ y\neq z}}
					\frac{u_{x_0}(y) +u_{x_0}(z)}2  ; 
					\min_{y\in\mathcal{S}(x)}  
					\frac{ u_{x_0}(\hat{x}) +  m u_{x_0}(y)}{m+1} 
			   \right\}=0=u_{x_0}(x).
        \end{equation}
        
		If $x=x_0$, then $u_{x_0}(\hat{x}_0)=0$ and 
		$u_{x_0}(y)=\tfrac{m-1}m(1+\tfrac1m)$
			for any $y\in\S(x_0).$ Therefore
		\begin{equation}
            \label{eq:caract2}
            \begin{aligned}
		       & \min 
			    \left\{ 
					\min_{\substack{y,z\in\mathcal{S}(x_0)\\ y\neq z}}
					\frac{u_{x_0}(y) +u_{x_0}(z)}2  ; 
					\min_{y\in\mathcal{S}(x_0)}  
					\frac{ u_{x_0}(\hat{x}) +  m u_{x_0}(y)}{m+1} 
			   \right\}
			   \\[10pt]
			   & \qquad =\frac{m-1}m\min\left\{
	                1+\frac1m;
	                1\right\}\\[10pt]
			        &\qquad =\frac{m-1}m\\[10pt]
			        &\qquad =u_{x_0}(x_0).
	    	\end{aligned}
		 \end{equation}
    
		Now, suppose that $x\in\T\setminus\{x_0\}$, and so,  
		\[
		    u_{x_0}(\hat{x})=\frac{m-1}m \sum_{i=0}^{|x|-1-|x_0|}\dfrac1{m^i}
		\]
		and
		\[
		    u_{x_0}(y)=\frac{m-1}m \sum_{i=0}^{|x|+1-|x_0|}\dfrac1{m^i}.
		\]
		Hence, we obtain
		\begin{equation}
            \label{eq:caract3}
            \begin{aligned}
		       \min 
			    \Bigg\{ 
					\min_{\substack{y,z\in\mathcal{S}(x_0)\\ y\neq z}}
					\frac{u_{x_0}(y) +u_{x_0}(z)}2  &; 
					\min_{y\in\mathcal{S}(x_0)}  
					\frac{ u_{x_0}(\hat{x}) +  m u_{x_0}(y)}{m+1} 
			   \Bigg\}\\[10pt]
			   &=\frac{m-1}m\min\left\{
	                \sum_{i=0}^{|x|+1-|x_0|}\dfrac1{m^i};
	                \sum_{i=0}^{|x|-|x_0|}\dfrac1{m^i}\right\}\\[10pt]
			        &=\frac{m-1}m \sum_{i=0}^{|x|-|x_0|}\dfrac1{m^i}\\[10pt]
			        &=u_{x_0}(x).
			\end{aligned}
		 \end{equation}
        
        Therefore, by \eqref{eq:caract1}, \eqref{eq:caract2} and \eqref{eq:caract3} we get that
        \[
            u_{x_0}(x)=\min 
			    \left\{ 
					\min_{\substack{y,z\in\mathcal{S}(x)\\ y\neq z}}
					\frac{u_{x_0}(y) +u_{x_0}(z)}2  ; 
					\min_{y\in\mathcal{S}(x)}  
					\frac{ u_{x_0}(\hat{x}) +  m u_{x_0}(y)}{m+1} 
			   \right\} \quad\forall x\in\T.
		\]
		Thus, $u_{x_0}$ is a convex function.
		
		Finally, we have to show that
		\[
            \limsup_{x\to\pi}u_{x_0}(x)
            \le
            \CChi_{I_{x_0}}(\psi(\pi)) \quad\forall
            \pi\in\partial\T.    
        \]
        
        {\it Case 1}. If $\pi\in\partial\T,$ $\psi(\pi)\in I_{x_0}$
        and $\psi(\pi)$ is not an endpoint of $I_{x_0}$ then
        for any sequence $\{x_k\}_{k\in\mathbb{N}}$ in $\T$ such that
        $\pi=(x_1,\dots,x_k,\dots)$, there is $k_0\in\mathbb{N}$ such that
        $x_k\in \T^{x_0}$ for all $k\ge k_0.$ 
        Then 
        \[
                u_{x_0}(x_k)=\frac{m-1}m
                \sum_{i=0}^{|x_k|-|x_0|}\frac1{m^i}\quad \forall k\le k_0.
        \]
        Thus, as $k\to\infty$ we have
        \[
            u_{x_0}(x_k)\to 1=\CChi_{I_{x_0}}(\psi(\pi)).
        \]

        {\it Case 2}. 
        Similarly, if $\pi\in\partial\T,$ $\psi(\pi)\not\in I_{x_0}$
        then for any sequence $\{x_k\}_{k\in\mathbb{N}}$ on $\T$ such that
        $\pi=(x_1,\dots,x_k,\dots)$, we get
        \[
            u_{x_0}(x_k)\to 0=\CChi_{I_{x_0}}(\psi(\pi))
        \]
        as $k\to\infty.$
        
        {\it Case 3}.
        Finally suppose that  $\pi\in\partial\T,$ $\psi(\pi)\in I_{x_0}$
        and $\psi(\pi)$ is an endpoint of $I_{x_0}.$ 
        
        In the case that $\psi(\pi)=0$ or $\psi(\pi)=1$ 
        for any sequence $\{x_k\}_{k\in\mathbb{N}}$ on $\T$ such that
        $\pi=(x_1,\dots,x_k,\dots)$, there is $k_0\in\mathbb{N}$ such that
        $x_k\in \T^{x_0}$ for all $k\ge k_0.$ Therefore
        \[
            u_{x_0}(x_k)\to 1=\CChi_{I_{x_0}}(\psi(\pi))
        \]
        as $k\to\infty.$
        
        In the case that $\psi(\pi)\not\in\{0,1\}$ there exists sequences 
        $\{x_k\}_{k\in\mathbb{N}},$ $\{y_k\}_{k\in\mathbb{N}}$ on $\T$ 
        and $k_0\in\mathbb{N}$ such that $\pi=(x_1,\dots,x_k,\dots),$
        $\pi=(y_1,\dots,y_k,\dots),$ for all $k\ge k_0$ we get
        $x_k\in \T^{x_0}$ and $y_k\in \T^{x_0}.$   Therefore,
        \begin{align*}
            &u_{x_0}(x_k)\to 1=\CChi_{I_{x_0}}(\psi(\pi)),\\
            &u_{x_0}(y_k)\to 0\le\CChi_{I_{x_0}}(\psi(\pi)).
        \end{align*}
        This fact, together with the previous cases 1 and 2, completes the proof. 
    \end{proof}

\section{Convex envelopes}\label{sect-convex-envelopes}
\setcounter{equation}{0}

	In this section we deal with convex functions on the tree.
	Let us start by showing that the convex envelop $u^*_g$
	of function $g\colon[0,1]\to\mathbb{R}$, defined in \eqref{convex-envelope-arbol}, is a convex function.
	
	\begin{lem}
	    \label{lema:convenv1}
	    For any function $g\colon[0,1]\to\mathbb{R},$
	    the convex envelop $u_g^*$ is a convex function.
	\end{lem}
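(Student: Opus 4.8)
The plan is to prove convexity of $u_g^*$ directly from the definition, exploiting the elementary fact that a supremum of convex functions is again convex because the convexity inequality is linear in $u$ with nonnegative coefficients that sum to one.

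First I would fix three vertices $x,y,z\in\T$ with $z\in[x,y]$ and set $\lambda\coloneqq d(y,z)/d(x,y)$. Since $z$ lies on the minimal path joining $x$ and $y$, one has $d(x,z)+d(z,y)=d(x,y)$, so $\lambda\in[0,1]$ and $1-\lambda=d(x,z)/d(x,y)$; thus the right-hand side of the convexity inequality is a genuine convex combination. For every competitor $u\in\mathcal{C}(g)$, the definition of convexity gives $u(z)\le \lambda\,u(x)+(1-\lambda)\,u(y)$, and since $\lambda,1-\lambda\ge0$ while $u(x)\le u_g^*(x)$ and $u(y)\le u_g^*(y)$ by definition of the supremum, we obtain $u(z)\le \lambda\,u_g^*(x)+(1-\lambda)\,u_g^*(y)$.

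Then I would take the supremum over all $u\in\mathcal{C}(g)$ on the left-hand side. Because the bound $\lambda\,u_g^*(x)+(1-\lambda)\,u_g^*(y)$ does not depend on $u$, it survives the supremum, yielding
\[
u_g^*(z)\le \frac{d(y,z)}{d(x,y)}\,u_g^*(x)+\frac{d(x,z)}{d(x,y)}\,u_g^*(y),
\]
which is precisely the defining inequality for $u_g^*$ to be convex. Equivalently, one could check that every $u\in\mathcal{C}(g)$ satisfies \eqref{subsol1} and pass to the supremum in each of the finitely many terms on the right (using $u(\hat{x})\le u_g^*(\hat x)$, $u(y)\le u_g^*(y)$, etc.), then invoke Lemma \ref{lema:convexeq}; the two routes are identical in substance.

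The only genuine issue, and the point I would be most careful about, is that $u_g^*$ must be a real-valued function for the statement to be meaningful: the class $\mathcal{C}(g)$ must be nonempty and the supremum must be finite at every vertex. Nonemptiness follows from the examples already built, since when $g$ is bounded below the constant functions belong to $\mathcal{C}(g)$, and in general the functions $u_{x_0}$ of Lemma \ref{lema:caract}, combined via Corollary \ref{co:sumconv}, provide competitors. Finiteness from above follows from a maximum-principle argument: the convexity inequality applied with $x\in[y,z]$ for distinct successors $y,z\in\S(x)$ gives $u(x)\le\tfrac12\bigl(u(y)+u(z)\bigr)\le\max_{w\in\S(x)}u(w)$, so choosing at each step a successor where the value does not decrease produces a descending branch $\pi$ along which $u$ stays $\ge u(x)$; hence $u(x)\le\limsup_{x'\to\pi}u(x')\le g(\psi(\pi))\le\sup g$ for every $u\in\mathcal{C}(g)$, and therefore $u_g^*\le\sup g<\infty$. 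Once finiteness is secured, the supremum argument above completes the proof.
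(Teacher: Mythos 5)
Your core argument is exactly the paper's proof: for each competitor $u\in\mathcal{C}(g)$ one writes $u(z)\le \frac{d(y,z)}{d(x,y)}u(x)+\frac{d(x,z)}{d(x,y)}u(y)\le \frac{d(y,z)}{d(x,y)}u_g^*(x)+\frac{d(x,z)}{d(x,y)}u_g^*(y)$ and takes the supremum over $u$ on the left; the paper does nothing more, and this part of your write-up is correct.

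The only divergence is your addendum on nonemptiness of $\mathcal{C}(g)$ and finiteness of the supremum, a point the paper silently ignores (it states the lemma for arbitrary $g$ and treats $u_g^*$ as real-valued). Raising the issue is legitimate, but your resolutions do not hold for arbitrary $g$. The bound $u_g^*\le\sup g<\infty$ presupposes $g$ bounded above, which is not granted. For nonemptiness, constants work when $g$ is bounded below, but the fallback via ``the functions $u_{x_0}$ of Lemma \ref{lema:caract} combined via Corollary \ref{co:sumconv}'' fails when $g$ takes negative values: the needed multiples $c\,u_{x_0}$ have $c<0$, and a negative multiple of a convex function is \emph{not} convex here, since the characterization \eqref{subsol1} involves a minimum and is destroyed by negative scalars (one checks directly that $c\,u_{x_0}$ violates \eqref{subsol1} at $x_0$ when $c<0$). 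Worse, convexity forces the increments of $u$ along any branch to satisfy $\delta_{k+1}\ge \delta_k/m$, so every $u\in\mathcal{C}(g)$ is bounded below on all of $\T$ by a single constant; hence if $g$ is unbounded below then $\mathcal{C}(g)=\emptyset$ and $u_g^*\equiv-\infty$, so the lemma as stated is vacuous there. This is a defect of the statement itself (``any function $g$'') that neither your addendum nor the paper repairs; since the paper's own proof carries the same implicit assumptions, it does not change the verdict that your main argument coincides with the paper's and is correct under those assumptions.
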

    \begin{proof} This follows easily from the fact that the supremum of convex functions is also convex.
        Given $g\colon[0,1]\to\mathbb{R},$ for every function $u\in\mathcal{C}(g)$ 
    	it holds that
	    \[
		    u(z) \leq \frac{d(y,z)}{d(x,y)} u(x) + 
		    \frac{d(x,z)}{d(x,y)} u(y)\le
		    \frac{d(y,z)}{d(x,y)} u^*_g(x) + 
		    \frac{d(x,z)}{d(x,y)} u^*_g(y) 
	    \]
	    for any $x,y,z\in \T$ with $z\in[x,y].$ Hence we get
        \[
            u^*_g(z) \le \frac{d(y,z)}{d(x,y)} u^*_g(x) + 
		    \frac{d(x,z)}{d(x,y)} u^*_g(y) 
	    \]
	    for any $x,y,z\in \T$ with $z\in[x,y].$ 
	    Thus $u^*_g$ is a convex function. 
    \end{proof}
    
    Our second aim is to  show that if $g$ is a continuous
    function then 
    \begin{equation}
        \label{eq:limb}
        \lim_{x\to \pi\in \partial \T} u_g^* (x) = 
            g(\psi(\pi))\quad\forall\pi\in\partial\T.
    \end{equation}
    To prove this property, we need to show a comparison principle.

    \begin{lem} \label{lema.compar.convexas}
        Let $u$ and $v$ satisfy
        \begin{align}
            \label{eq.envolvente.44.compar.u}
            u(x) &\ge 
                \min 
			    \left\{ 
					\min_{\substack{y,z\in\mathcal{S}(x_0)\\ y\neq z}}
					\frac{u(y) +u(z)}2  ; 
					\min_{y\in\mathcal{S}(x_0)}  
					\frac{  u(\hat{x}_0) + m u(y)}{m+1}
			    \right\} \quad\forall x\in\T,\\
			\label{eq.envolvente.44.compar.v}
			v(x) &\le 
			    \min 
			    \left\{ 
					\min_{\substack{y,z\in\mathcal{S}(x_0)\\ y\neq z}}
					\frac{v(y) +v(z)}2  ; 
					\min_{y\in\mathcal{S}(x_0)}  
					\frac{ v(\hat{x}_0) + m v(y)}{m+1}
			\right\}\quad\forall x\in\T,
        \end{align}
        with 
        \[
            \lim_{x\to \pi\in \partial \T} 
            u (x) \geq  \lim_{x\to \pi\in \partial \T} v (x) ,
        \]
        for every $\pi\in \partial \T$. Then,
        \[
            u(x) \geq v(x) \qquad \forall x\in \T.
        \]
    \end{lem}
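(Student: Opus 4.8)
The plan is to argue by contradiction by examining the supremum of the difference $w \defi v - u$. Set $M \defi \sup_{x\in\T} w(x)$ and suppose, aiming at a contradiction, that $M > 0$. First I would unpack the two hypotheses into a usable form. Since $v$ satisfies \eqref{eq.envolvente.44.compar.v}, at every vertex $x$ we have $v(x) \le \tfrac{v(y)+v(z)}2$ for all distinct $y,z \in \S(x)$ and $v(x) \le \tfrac{v(\hat x)+m v(y)}{m+1}$ for all $y\in\S(x)$. Since $u$ satisfies \eqref{eq.envolvente.44.compar.u} and $\S(x)$ is finite (so the inner minima are attained), at every $x$ \emph{at least one} of the following holds: either there exist distinct $y,z\in\S(x)$ with $u(x)\ge \tfrac{u(y)+u(z)}2$, or there exists $y\in\S(x)$ with $u(x)\ge \tfrac{u(\hat x)+m u(y)}{m+1}$.

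The heart of the argument is a propagation lemma: if $w$ attains the value $M$ at a vertex $x$, then $w(y)=M$ for some successor $y\in\S(x)$. Indeed, combining the supersolution alternative at $x$ with the matching subsolution inequality for $v$ gives $M=w(x)\le \tfrac{w(y)+w(z)}2$ in the first case and $M=w(x)\le \tfrac{w(\hat x)+m w(y)}{m+1}$ in the second; since every term $w(\cdot)\le M$, each of these forces equality, hence $w(y)=M$ at the corresponding successor $y$ (and also $w(z)=M$, resp. $w(\hat x)=M$). Note that at the root only the first alternative can occur, but it still yields a successor, so propagation toward the boundary is never blocked. With this I would split into two cases. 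If $M$ is attained at some vertex $x_0$, I iterate the propagation lemma to build a branch $x_0,y_1,y_2,\dots$ with $y_{k+1}\in\S(y_k)$ and $w(y_k)=M$ for all $k$. This branch determines a point $\pi\in\partial\T$ with $y_k\to\pi$, so $\lim_{x\to\pi}w(x)=M>0$, which contradicts the hypothesis $\lim_{x\to\pi}u(x)\ge \lim_{x\to\pi}v(x)$, i.e. $\lim_{x\to\pi}w(x)\le 0$.

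It remains to treat the case where the supremum is not attained, which I expect to be the main obstacle: the tree is infinite, so the near-maximizers may escape to the boundary rather than let the equation propagate the maximum. Here I would take $x_n$ with $w(x_n)\to M$; since there are only finitely many vertices of each level, a subsequence of bounded level would force $M$ to be attained at a repeated vertex, so necessarily $|x_n|\to\infty$. A pigeonhole/diagonal extraction on the coordinates then yields a subsequence $x_{n_j}\to\pi$ for some $\pi\in\partial\T$, whence $\lim_{x\to\pi}w(x)=\lim_{j}w(x_{n_j})=M>0$, again contradicting the boundary hypothesis. In both cases $M>0$ is impossible, so $M\le 0$, that is $u\ge v$ on all of $\T$. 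The only delicate points to verify carefully are that the boundary limits are genuinely reached along the constructed sequences (using that $\lim_{x\to\pi}w$ exists by hypothesis) and the level-escape dichotomy that distinguishes the two cases.
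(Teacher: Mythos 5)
Your propagation lemma and your Case 1 are correct, and they are exactly the engine of the paper's own proof: at a vertex where $w=v-u$ attains its maximum $M$, the alternative realizing the minimum in \eqref{eq.envolvente.44.compar.u}, combined with the same instance of \eqref{eq.envolvente.44.compar.v}, forces $w=M$ at a successor. (The paper finishes differently: after adding a constant to $u$ it asserts that the set of maximum points is nonempty and finite, and derives a contradiction because every vertex of that set would have two neighbours in it, which is impossible for a finite nonempty subset of an acyclic graph; your ending, following successors out to the boundary, needs only attainment.) The genuine gap is Case 2. Your pigeonhole/diagonal extraction produces a branch $\pi$ and vertices $x_{n_j}$ that agree with $\pi$ on longer and longer prefixes; it does not produce vertices lying \emph{on} $\pi$. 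Throughout the paper (and in your own Case 1), the boundary condition $\lim_{x\to\pi}u(x)\ge\lim_{x\to\pi}v(x)$ constrains $u$ and $v$ only along the vertices of the branch itself, i.e.\ along the sequence $\{x_k\}$ with $\pi=(x_1,\dots,x_k,\dots)$. Hence your final step $\lim_{x\to\pi}w(x)=\lim_j w(x_{n_j})=M$ is a non sequitur: on the binary tree, for instance, the vertices $(0,\dots,0,1)$ agree with the branch $\pi=(0,0,0,\dots)$ on arbitrarily long prefixes, yet none of them lies on $\pi$, and nothing in the hypotheses forbids $w$ from being close to $M$ at all of them while $w\le0$ along $\pi$ itself. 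So Case 2 proves nothing as written. (In fairness, this attainment issue is precisely the point the paper itself waves at with ``the maximum is attained thanks to \eqref{estric}''.)

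The gap is fixable, and in a way that makes Case 2 unnecessary: strengthen Case 1 so that attainment is never used. Suppose $\sup_{\T}w>0$ and pick any $x$ with $w(x)>0$; among the finitely many vertices on the path from the root to $x$, let $x_*$ be one where $w$ is largest, so that $w(x_*)\ge w(x)>0$ and either $x_*=\emptyset$ or $w(\hat{x}_*)\le w(x_*)$. Run your propagation from $p_0=x_*$, always moving to a successor supplied by the alternative that realizes the minimum for $u$: in the two-successor case choose the successor with the larger value of $w$, which gives $w(p_{i+1})\ge w(p_i)$; in the predecessor case one gets $w(p_{i+1})\ge w(p_i)+\tfrac1m\bigl(w(p_i)-w(p_{i-1})\bigr)$, and the parenthesis is $\ge0$ by induction (at $i=0$ it is $w(p_0)-w(\hat{x}_*)\ge 0$ by the choice of $x_*$, and at the root only the two-successor case occurs). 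Thus $w(p_i)\ge w(x_*)>0$ for every $i$, and the $p_i$ form the tail of a genuine branch $\pi$, along which the hypothesis forces $\limsup_{x\to\pi}w(x)\le0$ --- a contradiction. This monotone-path argument requires no compactness, no added constant, and no case distinction on whether the supremum is attained; with it, your overall strategy becomes a complete (and arguably tighter) proof than the one in the paper.
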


    \begin{proof}
        Adding a positive constant $c$ to $u,$ 
        we may assume that 
        \begin{equation} \label{estric}
                \lim_{x\to \pi\in \partial \T} u (x)  >  
                \lim_{x\to z\in \partial \T} v (x) .
        \end{equation}
        Our goal is to show that in this case  we have
        \[
            u(x) \geq v(x)  \quad \forall x\in\T
        \]
        (and then we obtain the result 
        just by letting $c\to 0$).

        We argue by contradiction and assume that
        \[
            M = \max_{x\in \T} ( v(x)-u(x) ) >0.
        \]
        Notice that the maximum is attained thanks to 
        \eqref{estric}. Also thanks to \eqref{estric},
        we have that $M$ is attained only in a finite set of nodes. Let ${x}$ be one of such nodes. 
        From \eqref{eq.envolvente.44.compar.u} and \eqref{eq.envolvente.44.compar.v} we obtain
        \begin{align*}
            M= v(x) - u(x) \le&
                \min 
			    \left\{ 
					\min_{\substack{y,z\in\mathcal{S}(x_0)\\ y\neq z}}
					\frac{v(y) +v(z)}2  ; 
					\min_{y\in\mathcal{S}(x_0)}  
					\frac{ v(\hat{x}_0) + m v(y)}{m+1}
			    \right\}\\
			    &-\min 
			    \left\{ 
					\min_{\substack{y,z\in\mathcal{S}(x_0)\\ y\neq z}}
					\frac{u(y) +u(z)}2  ; 
					\min_{y\in\mathcal{S}(x_0)}  
					\frac{  u(\hat{x}_0) + m u(y)}{m+1}
			    \right\}.\\
        \end{align*}
        From this inequality, using that
        $$
        M\geq \left(\frac{v(y) +v(z)}2 \right) - \left( \frac{u(y) +u(z)}2 \right), \qquad \forall y,z\in\mathcal{S}(x_0)\, y\neq z,
        $$
        and
        $$
        M\geq \left(\frac{ v(\hat{x}_0) + m v(y)}{m+1} \right) - \left( \frac{  u(\hat{x}_0) + m u(y)}{m+1} \right), \qquad \forall 
        y\in\mathcal{S}(x_0),
        $$
        we get
        \begin{align*}
            M \le&
                \min 
			    \left\{ 
					\min_{\substack{y,z\in\mathcal{S}(x_0)\\ y\neq z}}
					\frac{v(y) +v(z)}2  ; 
					\min_{y\in\mathcal{S}(x_0)}  
					\frac{ v(\hat{x}_0) + m v(y)}{m+1}
			    \right\}\\
			    &-\min 
			    \left\{ 
					\min_{\substack{y,z\in\mathcal{S}(x_0)\\ y\neq z}}
					\frac{u(y) +u(z)}2  ; 
					\min_{y\in\mathcal{S}(x_0)}  
					\frac{  u(\hat{x}_0) + m u(y)}{m+1}
			    \right\} \leq M.\\
        \end{align*}
        Hence, we obtain that there are two nodes $x_1$ and 
        $x_2$ connected with $x$ (one of them can be the predecessor)
        such that 
        \[
            v(x_1)-u(x_1) = M,\qquad \text{and} \qquad 
            v(x_2)-u(x_2) = M.
        \]
        Since this happens for every $x$ in the set of maximums of $v-u$ 
        and this set is finite, we obtain a contradiction that shows that
        \[
            u(x) \geq v(x),
        \]
        and proves the result.
    \end{proof}
    
    Now we will prove \eqref{eq:limb}.
    
    \begin{teo}\label{teo:limb}
        Let $g\colon[0,1]\to\mathbb{R}$ be a continuous function. 
        Then
        \[
            \lim_{x\to \pi\in \partial \T} u_g^* (x) = 
            g(\psi(\pi))
        \]
        for any $\pi\in\partial\T.$
    \end{teo}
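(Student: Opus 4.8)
The plan is to establish the two one-sided inequalities $\liminf_{x\to\pi}u_g^*(x)\ge g(\psi(\pi))$ and $\limsup_{x\to\pi}u_g^*(x)\le g(\psi(\pi))$ separately, by trapping $u_g^*$ near a fixed boundary point between an explicit convex subsolution and an explicit supersolution, both manufactured from the functions $u_{x_0}$ of Lemma \ref{lema:caract}. Fix $\pi\in\partial\T$, write $t_0=\psi(\pi)$, and given $\varepsilon>0$ use the uniform continuity of $g$ to pick $\delta>0$ with $|g(t)-g(t_0)|<\varepsilon$ whenever $|t-t_0|<\delta$. The key preliminary step is to choose $x_0$ to be a \emph{prefix of the branch} $\pi$ with $|x_0|$ so large that $1/m^{|x_0|}<\delta$; then $t_0\in I_{x_0}\subset(t_0-\delta,t_0+\delta)$ and, because $x_0$ lies on $\pi$, every vertex $x$ close enough to $\pi$ has $x_0$ as an ancestor, so $x\in\T^{x_0}$ and hence $\lim_{x\to\pi}u_{x_0}(x)=1$. (This is why I work with the fixed branch rather than with the value $t_0$: at the $m$-adic endpoints $\psi$ is two-to-one and $u_{x_0}$ genuinely fails to converge, whereas along one fixed branch the limit is unambiguous.)

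For the lower bound I would set $A=\min_{[0,1]}g$ and take the competitor
\[
\underline u \coloneqq (A-\varepsilon)+(g(t_0)-A)\,u_{x_0}.
\]
Since $g(t_0)-A\ge0$, this is a constant plus a nonnegative multiple of the convex function $u_{x_0}$, hence convex (Corollary \ref{co:sumconv}). Combining the estimate $\limsup_{x\to\pi'}u_{x_0}\le\CChi_{I_{x_0}}(\psi(\pi'))$ from Lemma \ref{lema:caract} with $g\ge g(t_0)-\varepsilon$ on $I_{x_0}$, one checks $\limsup_{x\to\pi'}\underline u\le g(\psi(\pi'))$ for every $\pi'$, so $\underline u\in\mathcal{C}(g)$. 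As $u_g^*\ge\underline u$ by definition and $\lim_{x\to\pi}\underline u=g(t_0)-\varepsilon$, letting $\varepsilon\to0$ gives $\liminf_{x\to\pi}u_g^*\ge g(t_0)$.

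For the upper bound, set $M=\max_{[0,1]}g$, $b=(M-g(t_0)-\varepsilon)_+$ and consider
\[
\overline u\coloneqq (g(t_0)+\varepsilon)+b\,(1-u_{x_0}).
\]
Because $u_{x_0}$ satisfies \eqref{eq.tree.convex} with equality (Lemma \ref{lema:caract}) and $b\ge0$, positive scaling commutes with the minimum and one verifies directly that $\overline u$ is a supersolution, i.e.\ it satisfies \eqref{eq.envolvente.44.compar.u}. The same $\limsup$ estimate for $u_{x_0}$, split into the cases $\psi(\pi')\in I_{x_0}$ and $\psi(\pi')\notin I_{x_0}$, gives $\liminf_{x\to\pi'}\overline u\ge g(\psi(\pi'))$ for every $\pi'$. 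I would then compare $\overline u$ with each $u\in\mathcal{C}(g)$: such $u$ is convex, hence a subsolution by Lemma \ref{lema:convexeq}, and satisfies $\limsup_{x\to\pi'}u\le g(\psi(\pi'))\le\liminf_{x\to\pi'}\overline u$. The comparison principle (Lemma \ref{lema.compar.convexas}) yields $u\le\overline u$; taking the supremum over $\mathcal{C}(g)$ gives $u_g^*\le\overline u$, and since $\lim_{x\to\pi}\overline u=g(t_0)+\varepsilon$ we get $\limsup_{x\to\pi}u_g^*\le g(t_0)+\varepsilon$. Letting $\varepsilon\to0$ and combining with the lower bound proves $\lim_{x\to\pi}u_g^*=g(\psi(\pi))$.

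I expect the main obstacle to be the upper bound, for two reasons. First, one must confirm both that $\overline u$ is genuinely a supersolution and that its boundary $\liminf$ dominates $g$ everywhere (not merely near $\pi$); this forces the slightly awkward truncation $b=(M-g(t_0)-\varepsilon)_+$, whose sole purpose is to keep the scaling coefficient nonnegative in the degenerate case where $t_0$ is a near-maximum of $g$. Second, Lemma \ref{lema.compar.convexas} must be applied against each competitor \emph{individually}, because $u_g^*$ carries only one-sided ($\limsup$) control of its own boundary values; I rely here on the fact that the proof of the comparison principle only uses $\limsup_{x\to\pi}(v-u)\le0$ at the boundary, which is exactly what $\limsup_{x\to\pi'}u\le\liminf_{x\to\pi'}\overline u$ supplies. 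The lower bound, by contrast, is immediate once $\underline u\in\mathcal{C}(g)$ is verified.
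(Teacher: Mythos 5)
Your proof is correct and follows essentially the same route as the paper: a lower bound via a convex competitor in $\mathcal{C}(g)$ built as an affine function of the $u_{x_0}$ from Lemma \ref{lema:caract}, and an upper bound via an affine-in-$u_{x_0}$ supersolution compared against each $u\in\mathcal{C}(g)$ through Lemma \ref{lema.compar.convexas}. The differences are cosmetic (your fixed $\varepsilon$--$\delta$ prefix of $\pi$ versus the paper's sequence of prefixes $z_n$ with $\min/\max$ of $g$ over $I_{z_n}$, and your avoidance of the normalization $g\ge0$); note the paper makes the same implicit use of the comparison lemma with only one-sided boundary control that you explicitly flag.
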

    
    \begin{proof}
        Let us start by observing that, for any constant $c$, 
        $u\in\mathcal{C}(g)$ if only if 
        $u+c\in\mathcal{C}(g+c).$ Therefore, 
        without loss of generality, we may 
        assume that $g$ is a nonnegative function.
        
        Let $\pi_0=(y_1,\dots,y_k,\dots)\in\partial\T.$ 
        For any $n\in\T,$ there exist $z_n\in\T^n$ and 
        $k_0$ such that $\psi(y_k)\in I_{z_n}$
        for all $k\ge k_0.$
        Now taking  $c=\min\{g(t)\colon t\in I_{z_n}\}$
        and $w_{n}=cu_{z_n}$ where $u_{z_n}$ is given by
        Lemma \ref{lema:caract}, we have that
        $w_{n}$ is a convex function such that
        \[
            \lim_{x\to\pi}w_{n}(x)\le g(\psi(\pi)),
            \qquad \forall\pi
                    \in\partial\T.
        \]
        Here, we are using that $g\ge 0$. 
        Then, $w_{n}\in\mathcal{C}(g),$ and 
        therefore $w_{n}(x)\le u^*_g(x)$ for any $x\in\T.$
        In particular, $w_{n}(y_k)\le u^*_g(y_k)$
        for any $k.$ Therefore,
        \[
           \min\{g(t)\colon t\in I_{z_n}\} =
           \lim_{k\to\infty} w_{n}(y_k)\le 
           \liminf_{k\to\infty}u_g^*(y_k).
        \]
        Taking the limit as $n\to \infty,$ 
        we have
        \[
           g(\psi(\pi_0))\le \liminf_{k\to\infty}u^*(y_k)
        \]
        since $g$ is a continuous function.
        
        Moreover, taking
        \[w^{n}(x)=a(1-u_{z_n})+bu_{z_n}=a+(b-a)w_n\] where 
        $a=2\max\{g(t)\colon t\in[0,1]\}$ and
        $b=\max\{g(t)\colon t\in I_{z_n}\},$ we have that
        \begin{align*}
            &w^{n}(x)=a+(b-a)w_n(x)\\
                &=a+(b-a)\min 
			    \left\{ 
					\min_{\substack{y,z\in\mathcal{S}(x_0)\\ y\neq z}}
					\frac{w_n(y) +w_n(z)}2  ; 
					\min_{y\in\mathcal{S}(x_0)}  
					\frac{  w_n(\hat{x}_0) + m w_n(y)}{m+1}
			    \right\}\\
			    &=\max 
			    \left\{ 
					\max_{\substack{y,z\in\mathcal{S}(x_0)\\ y\neq z}}
					a+\frac{(b-a)(w_n(y) +w_n(z))}2  ; 
					\max_{y\in\mathcal{S}(x_0)}  
					a+\frac{(b-a) (w_n(\hat{x}_0) + m w_n(y))}{m+1}
			    \right\}\\
			    &=\max 
			    \left\{ 
					\max_{\substack{y,z\in\mathcal{S}(x_0)\\ y\neq z}}
					\frac{w^n(y) +w^n(z)}2  ; 
					\max_{y\in\mathcal{S}(x_0)}  
				    \frac{w^n(\hat{x}) + m w^n(y)}{m+1}
			    \right\}\\
			    &\ge\min 
			    \left\{ 
					\min_{\substack{y,z\in\mathcal{S}(x_0)\\ y\neq z}}
					\frac{w^n(y) +w^n(z)}2  ; 
					\min_{y\in\mathcal{S}(x_0)}  
				    \frac{w^n(\hat{x}) + m w^n(y)}{m+1}
			    \right\}
	    \end{align*}
        for any $x\in\T$ and
        \[
            g(\psi(\pi))\le  \liminf_{z\to\pi}w^{n}(x_k) ,
            \qquad \forall\pi\in\partial\T.
        \]
        Thus, by Lemma \ref{lema.compar.convexas},
        for any $u\in\mathcal{C}(g)$ we have that 
        $u(x)\le w^{n}(x)$ for any $x\in\T.$ 
        Therefore $u_g^*(x)\le w^{n}(x)$ for any $x\in\T.$
        In particular, $u_g^*(y_k)\le w^{n}(y_k) $
        for any $k.$ Then
        \[
            \limsup_{k\to\infty}u_g^*(y_k)\le
            \lim w_{n}(y_k)
            =\max\{g(t)\colon t\in I_{z_n}\}.
        \]
        Again, taking the limit as $n\to \infty,$ 
        we have
        \[
            \limsup_{k\to\infty}u_g^*(y_k)\le g(\psi(\pi_0)).
        \]
        
        Therefore, we conclude that
        \[
            \lim_{k\to\infty}u^*(y_k)= g(\psi(\pi_0)).
        \]
        As $\pi_0\in\partial\T$ was arbitrary, we conclude
        \[
            \lim_{x\to\pi_0}u^*(x)= g(\psi(\pi_0))
        \]
        for any $\pi_0\in\partial\T.$
    \end{proof}
    
    Now our next goal is to find the equation that $u^*_g$ verifies on $\T$.

    \begin{teo}\label{teo:largesol} 
        Let $g\colon[0,1]\to\mathbb{R}$ be a continuous
        functions. The convex envelope $u^*_g$ is characterized as the largest solution
        to
        \begin{equation} \label{eq.envolvente}
            u (x)  = \min 
			\left\{ 
					\min_{\substack{y,z\in\mathcal{S}(x)\\ y\neq z}}
					\frac{u(y) +u(z)}2  ; 
					\min_{y\in\mathcal{S}(x)}  
					\frac{  u(\hat{x}) + m u(y)}{m+1} 
			\right\}\quad\text{on }\T,
        \end{equation}
        that verifies 
        \[
            \limsup_{x\to \pi\in \partial \T} u (x) \leq 
            g(\psi(\pi)).
        \]
    \end{teo}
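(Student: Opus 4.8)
The plan is to prove the two halves of the ``largest solution'' characterization separately: that $u^*_g$ is itself a solution of \eqref{eq.envolvente}, and that every solution satisfying the boundary inequality lies below $u^*_g$.

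The second half is essentially free from the tools already assembled. Let $v$ be any function satisfying \eqref{eq.envolvente} together with $\limsup_{x\to\pi}v(x)\le g(\psi(\pi))$. Since \eqref{eq.envolvente} in particular forces the inequality \eqref{subsol1}, Lemma \ref{lema:convexeq} tells us that $v$ is convex; combined with the boundary condition this means $v\in\mathcal{C}(g)$, and hence $v(x)\le u^*_g(x)$ for every $x$ directly from the definition of $u^*_g$ as a supremum over $\mathcal{C}(g)$. Thus no solution can exceed $u^*_g$.

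The real work is in the first half, showing that $u^*_g$ solves \eqref{eq.envolvente} with equality. By Lemma \ref{lema:convenv1} the envelope $u^*_g$ is convex, so Lemma \ref{lema:convexeq} already gives the ``$\le$'' direction of \eqref{eq.envolvente} at every node; moreover, by Theorem \ref{teo:limb} it attains the datum, so $u^*_g\in\mathcal{C}(g)$. I would obtain the reverse inequality by contradiction: suppose that at some vertex $x$ the inequality is strict, i.e.\ $u^*_g(x)<\min\{\dots\}$. Define $\tilde u$ to agree with $u^*_g$ at every vertex except $x$, and set $\tilde u(x)$ equal to that strictly larger minimum value. The crucial point is that $\tilde u$ remains convex, which by Lemma \ref{lema:convexeq} amounts to checking \eqref{subsol1} at every node. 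At $x$ the inequality holds by construction (indeed with equality, since the neighbouring values were not altered); at every other node only the two ``neighbours'' of $x$ can be affected. At the predecessor $\hat x$ the value $u(x)$ enters the right-hand side of \eqref{subsol1} as one of the successor values, and at each successor $y\in\mathcal{S}(x)$ it enters as the predecessor term $u(\hat y)$; in both cases $u(x)$ appears inside a minimum with a positive coefficient, so raising it can only increase that minimum and the inequality there is preserved. Since $\tilde u$ differs from $u^*_g$ at a single interior vertex it still satisfies the boundary condition, whence $\tilde u\in\mathcal{C}(g)$; but $\tilde u(x)>u^*_g(x)$ contradicts the definition of $u^*_g$ as the pointwise supremum over $\mathcal{C}(g)$. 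Therefore equality holds at every $x$, and $u^*_g$ solves \eqref{eq.envolvente}.

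I expect the monotonicity check in this bump argument to be the main obstacle, together with the bookkeeping for the root $x=\emptyset$, where the predecessor term is absent and \eqref{eq.envolvente} reduces to the minimum over distinct pairs of successors; there the same reasoning applies verbatim using only the successor neighbours. Alternatively, once $u^*_g$ is known to be a solution, one could deduce maximality from the comparison principle in Lemma \ref{lema.compar.convexas}, but the direct supremum argument above is shorter.
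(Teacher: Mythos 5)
Your proposal is correct and follows essentially the same route as the paper: convexity of $u^*_g$ (Lemma \ref{lema:convenv1}) plus Lemma \ref{lema:convexeq} gives the inequality, a one-vertex bump argument by contradiction upgrades it to equality, and maximality follows because any solution of \eqref{eq.envolvente} with the boundary inequality is convex by Lemma \ref{lema:convexeq} and hence lies in $\mathcal{C}(g)$. The only differences are cosmetic: you bump up to the minimum itself rather than by a small $\delta$ and you spell out the monotonicity check at the neighbours of the bumped vertex (which the paper leaves as an observation), while the paper phrases maximality through an auxiliary supremum $\overline{u}$ that is equivalent to your direct argument.
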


    \begin{proof} 
        Given $g\colon[0,1]\to\mathbb{R},$ by Lemmas \ref{lema:convenv1} and
        \ref{lema:convexeq} we get that $u^*_g$ verifies \eqref{subsol1}.

        Now, to see that we have an equality, 
        we argue by contradiction. Assume that at some node $x_0\in\T,$ 
        we have
        \[
            u_g^* (x_0)  < \min 
			\left\{ 
					\min_{\substack{y,z\in\mathcal{S}(x_0)\\ y\neq z}}
					\frac{u_g^*(y) +u_g^*(z)}2  ; 
					\min_{y\in\mathcal{S}(x_0)}  
					\frac{  u_g^*(\hat{x}_0) + m u_g^*(y)}{m+1}
			\right\}
		\]
		
		Taking $\delta>0$  such that
		\[
            u_g^* (x_0) + \delta  < \min 
			\left\{ 
					\min_{\substack{y,z\in\mathcal{S}(x)\\ y\neq z}}
					\frac{u_g^*(y) +u_g^*(z)}2  ; 
					\min_{y\in\mathcal{S}(x)}  
					\frac{  u_g^*(\hat{x}) + m u_g^*(y)}{m+1}
			\right\}
		\]
		and consider
        \[
            v(x) =
                \begin{cases}
                    u^*_g (x) &\text{if }x\neq x_0,\\
                    u_g^* (x_0) + \delta&\text{if }x= x_0.
                \end{cases}
        \]
        
        Observe that $v$ verifies \eqref{subsol1}. Thus, by
        Lemma \ref{lema:convexeq}, $v$ is convex. In addition, we have
        that $v\in\C(g).$ Therefore 
        \[
            v(x)\le u_g^*(x)\quad\forall x\in \T,
        \]
        leading to a contradiction. This proves that $u^*_g$ is a solution 
        to \eqref{eq.envolvente}.

        Finally, to see that $u^*_g$ is the largest solution 
        to \eqref{eq.envolvente} that verifies 
        \[
            \limsup_{x\to \pi\in \partial \T} u^*_g (x) \leq g(\psi(\pi)),
        \]
        it is enough to define
        \[
        \overline{u} (x) = \sup 
            \left\{ u(x)\colon u\text{ verifies \eqref{eq.envolvente} and }  \limsup_{x\to \pi\in \partial \T} u (x) 
            \leq g(\psi(\pi)) 
            \right\}.
        \]
        This function $\overline{u}$ trivially verifies
        \[
            \overline{u} (x) \geq u^*_g (x) \quad x \in \T,
        \]
        just notice that $u^*_g$ belongs to the set defining $\overline{u}$.
    
        On the other hand, since $\overline{u}$ is a solution to 
        \eqref{eq.envolvente}, by Lemma \ref{lema:convexeq}, we have that
        $\overline{u}$ is convex and therefore $\overline{u}\in\mathcal{C}(g).$
        Then
        \[
            \overline{u} (x) \leq u^*_g (x) \quad\forall x \in \T.
        \]
        We conclude that
        \[
            u^*_g(x) = \overline{u} (x) = 
            \sup \left\{ v(x) \colon  
            u \text{ verifies \eqref{eq.envolvente} and }  
            \limsup_{x\to \pi\in \partial \T} u (x) \leq g(\psi(\pi)) 
            \right\}.
        \]
    \end{proof}
    
    Observe that by Lemma \ref{lema.compar.convexas} and 
    Theorem \ref{teo:largesol}, 
    for any continuous function $g\colon [0,1] \mapsto \mathbb{R}$, the equation defining the convex envelope 
    has a unique solution that attains the datum $g$ continuously. 

    \begin{teo} 
        Let $g\colon[0,1] \mapsto \mathbb{R}$ be a continuous function. 
        There exists a unique solution to \eqref{eq.envolvente} such that 
        \[ 
            \lim_{x\to \pi \in \partial \T} u (x) = g(\psi(\pi)).
        \]
        for any $\pi\in\T.$
    \end{teo}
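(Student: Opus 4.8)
The plan is to recognize that this statement is essentially a corollary of the three results already established in this section, so almost all the work has been front-loaded. For \emph{existence}, I would simply exhibit the convex envelope $u^*_g$ defined in \eqref{convex-envelope-arbol} as the desired solution. Indeed, Theorem \ref{teo:largesol} already guarantees that $u^*_g$ solves the equation \eqref{eq.envolvente} on all of $\T$, and Theorem \ref{teo:limb} guarantees that, because $g$ is continuous, $u^*_g$ attains the datum as a genuine limit, $\lim_{x\to\pi} u^*_g(x) = g(\psi(\pi))$ for every $\pi\in\partial\T$. Combining these two facts immediately produces a solution to \eqref{eq.envolvente} satisfying the continuous boundary condition, which settles the existence half.

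For \emph{uniqueness}, the plan is to reduce everything to the comparison principle in Lemma \ref{lema.compar.convexas}. The key observation is that any solution of the equation \eqref{eq.envolvente}, satisfying equality, automatically satisfies \emph{both} the supersolution inequality \eqref{eq.envolvente.44.compar.u} and the subsolution inequality \eqref{eq.envolvente.44.compar.v}, since equality trivially implies both $\ge$ and $\le$. So, given two solutions $u_1$ and $u_2$ that both attain $g$ continuously on $\partial\T$, I would first regard $u_1$ as a supersolution and $u_2$ as a subsolution; since their boundary limits coincide (hence in particular $\lim_{x\to\pi} u_1(x) \ge \lim_{x\to\pi} u_2(x)$), Lemma \ref{lema.compar.convexas} yields $u_1(x)\ge u_2(x)$ for all $x\in\T$. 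Interchanging the roles of $u_1$ and $u_2$ gives the reverse inequality, and therefore $u_1\equiv u_2$.

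I do not expect a serious obstacle here, precisely because the substantive difficulties—the characterization of convex functions via \eqref{subsol1} in Lemma \ref{lema:convexeq}, the construction of barriers in Lemma \ref{lema:caract}, the comparison principle, and the boundary attainment for continuous data—have all been handled earlier. The only point that merits a careful word is verifying that the hypotheses of Lemma \ref{lema.compar.convexas} are genuinely met: one must confirm that the two solutions attain the datum as true limits (not merely as a $\limsup$), which is exactly what the continuity of $g$ together with Theorem \ref{teo:limb} provides, so the ordering of boundary limits needed to invoke the comparison lemma holds with equality. Once this is noted, the two applications of the comparison principle close the argument and establish both existence and uniqueness.
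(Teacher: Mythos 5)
Your proposal is correct and follows essentially the same route as the paper, which deduces this theorem directly from Theorem \ref{teo:largesol} together with Theorem \ref{teo:limb} for existence (the convex envelope $u^*_g$ is the solution) and from Lemma \ref{lema.compar.convexas} for uniqueness, applying the comparison principle in both directions exactly as you describe. Your added remark that a solution of the equality is simultaneously a supersolution and a subsolution, and that the boundary limits genuinely exist so the lemma's hypotheses are met, is precisely the (unwritten) content behind the paper's one-line justification.
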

    
    To end this section we prove Theorem \ref{teo.convex.arbol.f} 
 that deals with the convex envelope of a function $f:\T \mapsto \mathbb{R}$ given by \eqref{convex-envelope-arbol.f}.
 
 \begin{teo} \label{teo.convex.arbol.f.sec} 
		The convex envelope of a function $f\colon \T \to\mathbb{R}$ 
		is the solution to the obstacle problem for the equation \eqref{eq.tree.convex}. 
		\end{teo}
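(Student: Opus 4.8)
The plan is to follow closely the proof of Theorem \ref{teo:largesol}, with the obstacle $f$ now playing the role that the boundary datum played there. First I would note that $\mathcal{C}(f)$ is nonempty: since $f$ is bounded, any constant $c\le\inf_{\T}f$ is convex (it satisfies \eqref{subsol1} with equality) and lies below $f$, so the supremum defining $u^*_f$ is finite. Exactly as in Lemma \ref{lema:convenv1}, the pointwise supremum of the convex functions in $\mathcal{C}(f)$ is again convex, and because every competitor obeys $u\le f$ we get $u^*_f\le f$ as well; hence $u^*_f\in\mathcal{C}(f)$, and by Lemma \ref{lema:convexeq} it satisfies the subsolution inequality \eqref{eq.tree.convex.ffff}. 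Being the supremum of all such functions, $u^*_f$ is automatically the largest function satisfying \eqref{eq.tree.convex.ffff} together with $u\le f$, which is precisely the statement that $u^*_f$ solves the obstacle problem.

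Next I would treat the coincidence set. For $x\in CS(f)$ we have $f(x)=u^*_f(x)$, so feeding this into the subsolution inequality for $u^*_f$ gives $f(x)=u^*_f(x)\le\min\{\cdots u^*_f\cdots\}$. Since $u^*_f\le f$ at every vertex, each averaged expression inside the minimum can only increase when $u^*_f$ is replaced by $f$; by monotonicity of the minimum in its entries the right-hand side is therefore bounded above by the same minimum written with $f$. Chaining the two inequalities yields \eqref{eq.tree.convex.ffff.latengo}. This step uses nothing but the pointwise bound $u^*_f\le f$ and requires no perturbation.

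The substantive point is \eqref{eq.tree.convex.ffff.latengo.mmmm}, the equality of $u^*_f$ with its mean-value operator off the coincidence set, and here I would reproduce the contradiction argument of Theorem \ref{teo:largesol}. Fix $x_0\notin CS(f)$, so $u^*_f(x_0)<f(x_0)$, and suppose the subsolution inequality were strict at $x_0$. Choosing $\delta>0$ smaller than both the gap $f(x_0)-u^*_f(x_0)$ and the gap between $u^*_f(x_0)$ and the minimum on the right-hand side, I would define $v$ to equal $u^*_f$ except at $x_0$, where $v(x_0)=u^*_f(x_0)+\delta$. Raising the value only at $x_0$ keeps $v\le f$ by the first choice of $\delta$ and keeps the inequality at $x_0$ by the second. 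Thus $v$ is convex and $v\in\mathcal{C}(f)$, while $v(x_0)>u^*_f(x_0)$, contradicting the maximality of $u^*_f$; hence equality holds at every $x_0\notin CS(f)$.

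The only genuine obstacle is the neighbour check hidden in the last step: one must confirm that perturbing $u^*_f$ upward at the single vertex $x_0$ does not destroy \eqref{eq.tree.convex.ffff} at $\hat{x}_0$ or at any $y\in\S(x_0)$. This is exactly the monotonicity already exploited in Theorem \ref{teo:largesol}—increasing a single entry can only raise the minima that appear in the inequalities at those neighbours, so those inequalities are preserved—and the root case $x_0=\emptyset$, where the predecessor term is absent, is covered by the convention stated right after Theorem \ref{teo.convex.arbol}. With that verification in place the three displayed relations follow, completing the proof.
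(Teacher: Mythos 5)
Your proof is correct and follows essentially the same route as the paper's: identify $u^*_f$ with the largest subsolution below $f$ via Lemma \ref{lema:convexeq}, obtain \eqref{eq.tree.convex.ffff.latengo} on $CS(f)$ from the pointwise bound $u^*_f\le f$, and prove \eqref{eq.tree.convex.ffff.latengo.mmmm} off $CS(f)$ by the same single-vertex perturbation contradiction. Your explicit verification that raising the value at $x_0$ preserves \eqref{eq.tree.convex.ffff} at $\hat{x}_0$ and at the successors is a detail the paper leaves implicit, but it is the same argument.
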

		
		\begin{proof}
		Let us call $v^*$ to the largest solution to
		\begin{equation} \label{eq.tree.convex.ffffhhh}
			u (x)  \leq \min 
			\left\{ 
					\min_{\substack{y,z\in\mathcal{S}(x)\\ y\neq z}}
					\frac{u(y) +u(z)}2  ; 
					\min_{y\in\mathcal{S}(x)}  
					\frac{ u(\hat{x}) + m u(y)}{m+1} 
			\right\}\quad\text{on }\T,
		\end{equation}
		that verifies 
		\[
				u(x) \leq f(x) \qquad \forall x \in \T.
		\]
		
		We have to prove that the convex enevolpe of $f$, $u^*_f$, verifies
		\[
		u^*_f (x) = v^* (x), \qquad \forall x \in \T.
		\]
		
		Since $u^*_f$ is convex from Lemma \ref{lema:convexeq} we obtain that it is a solution to 
		\eqref{eq.tree.convex.ffffhhh} that verifies $u^*_f \leq f$ on $\T$ and then we obtain
		\[
		u^*_f (x) \leq v^* (x), \qquad \forall x \in \T.
		\]
		
		We have also from Lemma \ref{lema:convexeq} that $v^*$ being a solution to
		 \eqref{eq.tree.convex.ffffhhh} is a convex function and it verifies $v^* \leq f$ on $\T$. Hence,
		\[
		v^* (x) \leq u^*_f (x), \qquad \forall x \in \T.
		\]
We conclude that 
		\[
		u^*_f (x) = v^* (x), \qquad \forall x \in \T.
		\]
				
In the coincidence set, the function $f$ verifies an inequality. From the fact that $u^*_f$ is convex and smaller 
than $f$ we obtain for $x \in CS(f)$,
\begin{align} \label{eq.tree.convex.ffff.latengo.kkk}
		f (x) & = u^*_f(x) \\
		& \leq \displaystyle \min 
			\left\{ 
					\min_{\substack{y,z\in\mathcal{S}(x)\\ y\neq z}}
					\frac{u^*_f(y) + u^*_f(z)}2  ; 
					\min_{y\in\mathcal{S}(x)}  
					\frac{ u^*_f(\hat{x}) + m \,  u^*_f (y)}{m+1} 
			\right\} \\[10pt]
			& \displaystyle  \leq \min 
			\left\{ 
					\min_{\substack{y,z\in\mathcal{S}(x)\\ y\neq z}}
					\frac{f(y) + f(z)}2  ; 
					\min_{y\in\mathcal{S}(x)}  
					\frac{ f(\hat{x}) + m f (y)}{m+1} 
			\right\}.
\end{align}

		Finally, outside of the coincidence set the convex envelope, $u^*_f$, is a solution to the equation.
		In fact, arguing by contradiction, assume that for some $x_0 \not\in CS(f)$ it holds
		\begin{equation} \label{eq.tree.convex.ffff.latengo.mmmm.llll}
			u^*_f (x_0)  < \min 
			\left\{ 
					\min_{\substack{y,z\in\mathcal{S}(x_0)\\ y\neq z}}
					\frac{u^*_f(y) +u^*_f(z)}2  ; 
					\min_{y\in\mathcal{S}(x_0)}  
					\frac{ u^*_f(\hat{x_0}) + m u^*_f(y)}{m+1} 
			\right\}.
		\end{equation}
		Then, since $x_0 \not\in CS(f)$ and we have a strict inequality in \eqref{eq.tree.convex.ffff.latengo.mmmm.llll}
		there exists $\delta>0$ such that the function
		\[
            v(x) =
                \begin{cases}
                    u^*_f (x) &\text{if }x\neq x_0,\\
                    u_f^* (x_0) + \delta&\text{if }x= x_0
                \end{cases}
        \]
        is convex and still verifies $v\leq f$ on $\T$ contradicting the maximality of the convex envelope $u^*_f$.
		This contradiction shows that we have an equality in \eqref{eq.tree.convex.ffff.latengo.mmmm.llll}.
		\end{proof}

\section{Binary convex functions}
\label{section.biconvfunction}
    As in Section \ref{sect-convex}, we begin showing a different characterization of binary convex functions.
    
    \begin{lem}\label{lema:biconvexeq}
        A function $u$ on the tree is binary convex 
        if and only if $u$ satisfies         
        \begin{equation} \label{subsol2}
            u (x)  \le 
					\min_{\substack{y,z\in\mathcal{S}(x)\\ y\neq z}}
					\frac{u(y) +u(z)}2   
					\qquad \forall x\in \T.
        \end{equation}
    \end{lem}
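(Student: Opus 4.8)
The plan is to prove the two implications separately: the forward implication is immediate, and the converse proceeds by induction on the number of vertices of the binary subtree.

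For the forward implication, assume $u$ is binary convex. Given $x\in\T$ and any two distinct successors $y,z\in\S(x)$, the subgraph $\mathbb{B}=\{x,y,z\}$ belongs to $\mathbb{T}_2^x$ and its set of endpoints is exactly $\{y,z\}$, both at level $|x|+1$. The defining inequality \eqref{convex.II} then reduces to $u(x)\le \tfrac12 u(y)+\tfrac12 u(z)$, and taking the minimum over all pairs $y\neq z$ in $\S(x)$ yields \eqref{subsol2}.

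For the converse, assume $u$ satisfies \eqref{subsol2}; I must show that $u(x)\le\sum_{y\in\mathcal{E}(\mathbb{B})}2^{-(|y|-|x|)}u(y)$ for every $x\in\T$ and every $\mathbb{B}\in\mathbb{T}_2^x$. I will argue by induction on the number of vertices of $\mathbb{B}$. The base case is $\mathbb{B}=\{x,y,z\}$, for which the claim is precisely \eqref{subsol2}. For the inductive step, let $\mathbb{B}$ have depth $d\coloneqq\max_{v\in\mathbb{B}}(|v|-|x|)\ge 2$, and pick an endpoint $w\in\mathcal{E}(\mathbb{B})$ of maximal level. Its predecessor $p=\hat{w}$ lies in $\mathbb{B}$, is not the root $x$ (since $|w|\ge |x|+2$ forces $|p|>|x|$), and is not an endpoint, so $\S(p)\cap\mathbb{B}=\{w,w'\}$ for some sibling $w'$. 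By maximality of $|w|$, the sibling $w'$ cannot have a successor in $\mathbb{B}$, hence $w'$ is also an endpoint, and $w,w'$ are two leaves at level $|p|+1$. I then prune them, setting $\mathbb{B}'\coloneqq\mathbb{B}\setminus\{w,w'\}$. Since $p\neq x$, the root keeps its two successors, the condition is unaffected at every remaining vertex, and $p$ becomes an endpoint, so $\mathbb{B}'\in\mathbb{T}_2^x$ with $\mathcal{E}(\mathbb{B}')=\bigl(\mathcal{E}(\mathbb{B})\setminus\{w,w'\}\bigr)\cup\{p\}$ and strictly fewer vertices.

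By the inductive hypothesis applied to $\mathbb{B}'$, together with \eqref{subsol2} at $p$ (which gives $u(p)\le\tfrac12 u(w)+\tfrac12 u(w')$) and $|w|=|w'|=|p|+1$, the single term $2^{-(|p|-|x|)}u(p)$ appearing in the bound for $\mathbb{B}'$ is dominated by $2^{-(|w|-|x|)}u(w)+2^{-(|w'|-|x|)}u(w')$, while every other endpoint of $\mathbb{B}'$ coincides with an endpoint of $\mathbb{B}$. Substituting reproduces exactly the sum over $\mathcal{E}(\mathbb{B})$ and closes the induction. The only points demanding care are checking that the pruned graph $\mathbb{B}'$ still meets the defining conditions of $\mathbb{T}_2^x$ — which hinges on $p\neq x$, guaranteed by $d\ge 2$ — and that the dyadic weights bookkeep correctly; both are routine, since replacing a leaf of weight $2^{-k}$ by two children of weight $2^{-(k+1)}$ preserves the total weight $1$. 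Thus the main (mild) obstacle is organizing the induction so that one always prunes a pair of sibling leaves of maximal level rather than attempting to split $\mathbb{B}$ at the children of $x$, which would produce single-vertex pieces lying outside $\mathbb{T}_2^{\,\cdot}$.
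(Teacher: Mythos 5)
Your proof is correct and follows essentially the same strategy as the paper's: the forward implication via the three-vertex subtree $\{x,y,z\}$, and the converse by pruning deepest leaves and applying \eqref{subsol2} at their common parent, with the dyadic weights $2^{-(|y|-|x|)}$ recombining exactly. The only (immaterial) difference is bookkeeping: the paper inducts on the depth of $\mathbb{B}$ and removes \emph{all} maximal-level endpoints simultaneously, whereas you induct on the number of vertices and remove one sibling pair of maximal-level leaves at a time.
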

    \begin{proof}
        Let us start the proof observing that if $x\in \T,$
        $y,z\in \S(x)$ and $y\neq z$ then $[y,z]\in\mathbb{T}_{2}^{x}$ and $\mathcal{E}([y,z])=\{y,z\}.$
        Therefore if $u$ is a binary convex function, 
        $x\in\T$ and $y,z\in \S(x)$ are such taht $y\neq z$
        then
        \[
            u(x)\le \dfrac{u(y)+u(z)}{2}.
        \]
        Thus, $u$ satisfies \eqref{subsol2} in $\T.$
        
        Now assume that $u$ satisfies \eqref{subsol2}. Our aim is to prove that $u$ is a binary convex function,
        that is, we aim to show that
        \begin{equation}\label{eq:biconvdef}
            u(x)\le \sum_{y\in\mathcal{E}(\mathbb{B})}
            \dfrac{u(y)}{2^{|y|-|x|}}\quad \forall 
            \mathbb{B}\in\mathbb{T}_{2}^{x}.
        \end{equation}
        
        Fix $x\in \T.$  Given $\mathbb{B}\in\mathbb{T}_{2}^{x},$ we
        define 	 
        \[
            |\mathbb{B}|\coloneqq
           \max \big\{|z|-|x|\colon z\in\mathcal{E}(\mathbb{B})\big\}\in\mathbb{N}
        \]
        and
        \[
            \mathbb{T}_{2}^{x n} \coloneqq
            \big\{\mathbb{B}\in\mathbb{T}_{2}^{x}\colon |\mathbb{B}|=n\big\}\subset\mathbb{T}_{2}^{x}.
        \]
        
        The proof of \eqref{eq:biconvdef} runs by induction in
        $n.$
        Observe that in the case $|\mathbb{B}|=1$ there exist 
        $y,z\in\S(x)$ such that $\mathbb{B}=[y,z]$ and
        obviously $\mathcal{E}(\mathbb{B})=\{y,z\}.$ Then,
        since $u$ satisfies \eqref{subsol2}, we get
        \[
            u(x)\le\dfrac{u(y)+u(z)}2=
            \sum_{y\in\mathcal{E}(\mathbb{B})}
            \dfrac{u(y)}{2^{|y|-|x|}}.
        \]
        That is \eqref{eq:biconvdef} holds for any 
        $\mathbb{B}\in\mathbb{T}_{2}^{x 1}.$
        
        Now we assume that \eqref{eq:biconvdef} holds for any 
        $\mathbb{B}\in\mathbb{T}_{2}^{x n},$ and we will show that it also holds for any $\mathbb{B}\in\mathbb{T}_{2}^{x (n+1}.$
        
        If $\mathbb{B}\in\mathbb{T}_{2}^{x (n+1)}$ then
        $\mathbb{B}'=\mathbb{B}\setminus 
        \{y\in\mathcal{E}(\mathbb{B})\colon |y|-|x|=n+1\}\in\mathbb{T}_{2}^{x n}.$ Then, 
        by the inductive hypothesis, we get
        \begin{equation}\label{eq:biconaux1}
            u(x)\le\sum_{y\in\mathcal{E}(\mathbb{B}')}
            \dfrac{u(y)}{2^{|y|-|x|}}.
        \end{equation}
        
        On the other hand, for any 
        $y\in\mathcal{E}(\mathbb{B})$ we have that  $y\in\mathcal{E}(\mathbb{B}')$ or there are 
        $w\in \mathcal{E}(\mathbb{B}')$ and 
        $z\in\mathcal{E}(\mathbb{B})\setminus\{y\}$
        such that $y,z\in \S(w).$ Thus, since
        $u$ satisfies \eqref{subsol2}, from
        \eqref{eq:biconaux1}, we have that
        \[
            u(x)\le\sum_{y\in\mathcal{E}(\mathbb{B})}
            \dfrac{u(y)}{2^{|y|-|x|}}.
        \]
        
        Finally, since $x$ is arbitrary, we conclude that $u$ is a binary convex function.
    \end{proof}
    
    \begin{re}\label{re:cfisbcf}
        Now, by Lemmas \ref{lema:convexeq} and 
        \ref{lema:biconvexeq}, it is easy to check 
        that a convex function is also a binary convex 
        function.
    \end{re}
    Proceeding as in the proof of Corollary 
    \ref{co:sumconv} we can prove the following result.
    
    \begin{co}\label{co:sumbiconv}
        Let $u,v\colon \T\to\mathbb{R}$ be binary 
        convex functions. Then $u+v$ is a 
        binary convex function.
    \end{co}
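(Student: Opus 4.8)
The plan is to mirror the proof of Corollary~\ref{co:sumconv}, replacing the characterization of convexity from Lemma~\ref{lema:convexeq} with the characterization of binary convexity from Lemma~\ref{lema:biconvexeq}. First I would invoke Lemma~\ref{lema:biconvexeq} to rewrite the hypotheses: since $u$ and $v$ are binary convex, for every $x\in\T$ they satisfy
\[
    u(x)\le \min_{\substack{y,z\in\mathcal{S}(x)\\ y\neq z}}\frac{u(y)+u(z)}2,
    \qquad
    v(x)\le \min_{\substack{y,z\in\mathcal{S}(x)\\ y\neq z}}\frac{v(y)+v(z)}2.
\]
Adding these two inequalities gives
\[
    u(x)+v(x)\le \min_{\substack{y,z\in\mathcal{S}(x)\\ y\neq z}}\frac{u(y)+u(z)}2
    + \min_{\substack{y,z\in\mathcal{S}(x)\\ y\neq z}}\frac{v(y)+v(z)}2 .
\]

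The key step --- and the only one requiring a genuine observation --- is the elementary fact that the sum of two minima is bounded above by the minimum of the sum taken over the same index set: for any functions $F,G$ defined on a common finite set one has $\min F+\min G\le \min(F+G)$, simply because $\min F+\min G\le F(a)+G(a)$ for every index $a$, and taking the minimum over $a$ on the right preserves the inequality. Applying this with $F(y,z)=\tfrac{u(y)+u(z)}2$ and $G(y,z)=\tfrac{v(y)+v(z)}2$ over the pairs $y,z\in\mathcal{S}(x)$ with $y\neq z$ yields
\[
    u(x)+v(x)\le \min_{\substack{y,z\in\mathcal{S}(x)\\ y\neq z}}\frac{(u+v)(y)+(u+v)(z)}2 .
\]

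Finally I would appeal to Lemma~\ref{lema:biconvexeq} in the reverse direction: since $u+v$ satisfies \eqref{subsol2} at every vertex $x\in\T$, it is a binary convex function, which completes the argument. The main (and essentially only) obstacle is the passage through the inequality relating the sum of minima to the minimum of the sum; everything else is a direct transcription of the reasoning behind Corollary~\ref{co:sumconv}. In fact the present case is slightly simpler, since the minimum in \eqref{subsol2} is taken over a single family of pairs $\{y,z\}\subset\mathcal{S}(x)$, rather than over the two competing families (successor pairs and predecessor--successor combinations) that appear in \eqref{subsol1}.
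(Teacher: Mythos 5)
Your proof is correct and is exactly the argument the paper intends: the paper proves this corollary by simply stating that one proceeds as in Corollary~\ref{co:sumconv}, i.e., characterize binary convexity via Lemma~\ref{lema:biconvexeq}, add the two inequalities, and use that the sum of minima is at most the minimum of the sum. Your observation that this case is even simpler (one family of pairs instead of two competing minima) is also accurate.
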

    
    Now, we obtain the following result, whose proof is similar to Lemma  
    \ref{lema:caract}. 
    \begin{lem}\label{lema:bicaract}
        Let $x_0\in\T\setminus\{\emptyset\}.$ 
        Then the function $u_{x_0}\colon\T\to\mathbb{R}$ defined by
        \[
            u_{x_0}(x)\coloneqq 
            \begin{cases}
                0 &\text{if } x\not\in\T^{x_0},\\
                1&\text{if } x\in\T^{x_0},
            \end{cases}
        \]
        is a binary convex function such that
        \[
            \limsup_{x\to\pi}u_{x_0}(x)
            \le
            \CChi_{I_{x_0}}(\psi(\pi)) \quad\forall
            \pi\in\partial\T.    
        \]
        Moreover, 
        \[
            u_{x_0}(x)= 
					\min_{\substack{y,z\in\mathcal{S}(x)\\ y\neq z}}
					\frac{u_{x_0}(y) +u_{x_0}(z)}2 \quad\forall x\in\T,
		\]
        and for any $\pi\in\T$ such that $\psi(\pi)$ is not an
        endpoint of $I_{x_0}$ we have
        \[
            \lim_{x\to\pi}u_{x_0}(x)
            =\CChi_{I_{x_0}}(\psi(\pi)).    
        \]
    \end{lem}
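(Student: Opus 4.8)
The plan is to prove Lemma~\ref{lema:bicaract} by following closely the structure of the proof of Lemma~\ref{lema:caract}, exploiting the fact that binary convexity has a simpler characterization (Lemma~\ref{lema:biconvexeq}) than the first notion of convexity. Concretely, four things must be checked: (i) the mean-value identity for $u_{x_0}$ at every node, (ii) that this implies $u_{x_0}$ is binary convex, (iii) the $\limsup$ bound by the characteristic function on all of $\partial\T$, and (iv) the exact limit at boundary points that are not endpoints of $I_{x_0}$.

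First I would verify the identity
\[
u_{x_0}(x)=\min_{\substack{y,z\in\mathcal{S}(x)\\ y\neq z}}\frac{u_{x_0}(y)+u_{x_0}(z)}2
\qquad\forall x\in\T
\]
by a case analysis on the position of $x$ relative to $\T^{x_0}$. The key structural observation is that $\T^{x_0}=\{x\colon |x|\ge|x_0|,\ I_x\subset I_{x_0}\}$ is ``forward closed'': if $x\in\T^{x_0}$ then every successor of $x$ also lies in $\T^{x_0}$, so $u_{x_0}(y)=1$ for all $y\in\S(x)$ and the right-hand side equals $1=u_{x_0}(x)$. If $x\notin\T^{x_0}$, then at least two successors of $x$ lie outside $\T^{x_0}$ (since at most one successor can have its associated interval contained in $I_{x_0}$), hence there exist $y\neq z$ in $\S(x)$ with $u_{x_0}(y)=u_{x_0}(z)=0$, giving a minimum of $0=u_{x_0}(x)$. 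Once this identity holds, Lemma~\ref{lema:biconvexeq} immediately yields that $u_{x_0}$ is binary convex, since the identity is in particular the subsolution inequality \eqref{subsol2}.

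Next I would establish the boundary behaviour exactly as in the three cases of Lemma~\ref{lema:caract}. For $\pi$ with $\psi(\pi)\notin I_{x_0}$, any branch representing $\pi$ eventually stays outside $\T^{x_0}$, so $u_{x_0}\to0=\CChi_{I_{x_0}}(\psi(\pi))$. For $\pi$ with $\psi(\pi)$ an interior point of $I_{x_0}$, any branch eventually enters and remains in $\T^{x_0}$, so $u_{x_0}\to1=\CChi_{I_{x_0}}(\psi(\pi))$; this also gives the claimed exact limit at non-endpoint boundary points. The only delicate case is when $\psi(\pi)$ is an endpoint of $I_{x_0}$: there one has two branches representing the same boundary point (the usual ambiguity of the dyadic-type expansion $\psi$), one eventually inside $\T^{x_0}$ giving limit $1$ and one eventually outside giving limit $0$. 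Since $\CChi_{I_{x_0}}(\psi(\pi))=1$ at an endpoint of the closed interval $I_{x_0}$, taking the $\limsup$ over all approximating sequences yields the bound $\limsup_{x\to\pi}u_{x_0}(x)\le 1=\CChi_{I_{x_0}}(\psi(\pi))$.

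I expect the main obstacle to be purely bookkeeping at the endpoints of $I_{x_0}$, precisely as in Lemma~\ref{lema:caract}: one must be careful that the inequality in the $\limsup$ statement holds on all of $\partial\T$ (so the endpoint branch landing outside, with limit $0$, is harmless because the characteristic function value there is $1$), while the exact-limit conclusion is only asserted away from the endpoints, where the two-branch ambiguity does not arise and the limit is genuinely well defined and equals the characteristic function. Since binary convexity drops the predecessor term entirely, the verification here is strictly simpler than in Lemma~\ref{lema:caract}, and no new ideas beyond the forward-closedness of $\T^{x_0}$ and the structure of $\psi$ are needed.
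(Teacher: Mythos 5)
Your proposal follows exactly the route the paper intends: the paper omits the proof of Lemma \ref{lema:bicaract}, saying only that it is ``similar to Lemma \ref{lema:caract}'', and your adaptation (drop the predecessor term, verify the mean-value identity by cases using forward-closedness of $\T^{x_{0}}$, invoke Lemma \ref{lema:biconvexeq}, then repeat the three boundary cases) is precisely that proof. For $m\ge 3$ every step you give is correct.

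There is, however, a genuine gap when $m=2$, a value the paper explicitly allows ($m\in\mathbb{N}_{\ge 2}$). Your counting step reads: at most one successor of $x\notin\T^{x_{0}}$ lies in $\T^{x_{0}}$, \emph{hence} there exist $y\neq z$ in $\mathcal{S}(x)$ with $u_{x_0}(y)=u_{x_0}(z)=0$. The ``hence'' needs at least two successors outside $\T^{x_{0}}$, i.e.\ $m-1\ge 2$. When $m=2$ and $x=\hat{x}_0$ this fails: $\mathcal{S}(\hat{x}_0)=\{x_0,s\}$ with $u_{x_0}(x_0)=1$ and $u_{x_0}(s)=0$, so the unique pair gives
\[
\min_{\substack{y,z\in\mathcal{S}(\hat{x}_0)\\ y\neq z}}\frac{u_{x_0}(y)+u_{x_0}(z)}2=\frac12\neq 0=u_{x_0}(\hat{x}_0).
\]
So at this vertex the claimed equality --- indeed the ``Moreover'' identity of the lemma itself --- is false for $m=2$; only the inequality $\le$ (binary convexity, i.e.\ \eqref{subsol2}) survives. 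The contrast with Lemma \ref{lema:caract} is instructive: there the analogous two-successor claim used for \eqref{eq:caract1} also fails at $\hat{x}_0$ when $m=2$, but the equality is rescued by the predecessor term $\frac{u(\hat{x})+m\,u(y)}{m+1}$, which is exactly what the binary setting lacks. A careful write-up should therefore either assume $m\ge 3$ for the identity, or weaken the ``Moreover'' part to an inequality when $m=2$; this costs nothing later, since Theorem \ref{teo:bilimb} only uses $u_{z_n}$ through the supersolution property of $a+(b-a)u_{z_n}$ with $b\le a$, and that follows from the inequality alone.

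One minor imprecision, not affecting validity: at an endpoint of $I_{x_0}$ you assert there are always two branches representing $\psi(\pi)$; when $\psi(\pi)\in\{0,1\}$ there is only one (the paper's Case 3 in Lemma \ref{lema:caract} treats this separately), but since that single branch lies in $\T^{x_{0}}$ the $\limsup$ bound still holds.
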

    
\section{Binary Convex envelopes}
\label{sect-biconvex-envelopes}
    Proceeding as in the proof of Lemma \ref{lema:convenv1}
    we can show that the binary convex envelop is a binary convex function.
    
	\begin{lem}
	    \label{lema:biconvenv1}
	    For any function $g\colon[0,1]\to\mathbb{R},$
	    the binary convex envelop $\tilde{u}_g$ is a 
	    binary convex function.
	\end{lem}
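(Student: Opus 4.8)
The plan is to mirror the proof of Lemma \ref{lema:convenv1}, exploiting the elementary fact that the pointwise supremum of a family of binary convex functions is again binary convex. Concretely, I would argue directly from the defining inequality \eqref{convex.II}. First, fix an arbitrary vertex $x\in\T$ and an arbitrary finite binary subtree $\mathbb{B}\in\mathbb{T}_{2}^{x}$. For every competitor $u\in\mathcal{B}(g)$, the binary convexity of $u$ gives
\[
u(x)\le \sum_{y\in\mathcal{E}(\mathbb{B})}\frac{1}{2^{|y|-|x|}}\,u(y).
\]
Since $u\in\mathcal{B}(g)$ forces $u(y)\le\tilde{u}_g(y)$ at every vertex $y$, and since all the weights $2^{-(|y|-|x|)}$ are strictly positive, the right-hand side is dominated by $\sum_{y\in\mathcal{E}(\mathbb{B})}2^{-(|y|-|x|)}\tilde{u}_g(y)$. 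Hence each competitor satisfies
\[
u(x)\le \sum_{y\in\mathcal{E}(\mathbb{B})}\frac{1}{2^{|y|-|x|}}\,\tilde{u}_g(y).
\]

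The second step is to pass to the supremum on the left. The point that makes this immediate is that the bound just obtained is \emph{independent of the particular competitor} $u$: its right-hand side is written entirely in terms of $\tilde{u}_g$ and of the fixed data $x$ and $\mathbb{B}$. Taking the supremum over all $u\in\mathcal{B}(g)$ therefore yields
\[
\tilde{u}_g(x)\le \sum_{y\in\mathcal{E}(\mathbb{B})}\frac{1}{2^{|y|-|x|}}\,\tilde{u}_g(y).
\]
As $x\in\T$ and $\mathbb{B}\in\mathbb{T}_{2}^{x}$ were arbitrary, this is exactly the defining inequality \eqref{convex.II} for $\tilde{u}_g$, so $\tilde{u}_g$ is binary convex. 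I would also mention the equivalent shortcut: run the same supremum argument on the two-successor inequality \eqref{subsol2} and then invoke Lemma \ref{lema:biconvexeq}. That version is marginally shorter, since only two endpoint values appear, but the conclusion is the same.

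I do not expect a serious obstacle here; the content is genuinely routine, as the phrase ``proceeding as in the proof of Lemma \ref{lema:convenv1}'' suggests. The only point deserving a word of care is the interchange of the supremum with the sum, and this is legitimate precisely because $\mathcal{E}(\mathbb{B})$ is a \emph{finite} set and the bounding expression does not involve the competitor. A second, minor technical issue is the well-definedness of $\tilde{u}_g$ as a (possibly extended-real) function; exactly as in Lemma \ref{lema:convenv1}, the argument uses only the one-sided estimate $u(y)\le\tilde{u}_g(y)$ together with the positivity of the weights, so it carries over verbatim wherever the supremum is finite, in particular whenever $g$ is bounded above.
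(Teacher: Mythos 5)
Your proposal is correct and is exactly what the paper intends: the paper gives no explicit proof here, saying only ``Proceeding as in the proof of Lemma \ref{lema:convenv1}'', and your argument is precisely that adaptation --- bound each competitor $u\in\mathcal{B}(g)$ by $\tilde{u}_g$ on the endpoints of $\mathbb{B}$ using the positivity of the (finitely many) weights, then take the supremum on the left. Both your main route via \eqref{convex.II} and the shortcut via \eqref{subsol2} and Lemma \ref{lema:biconvexeq} are sound.
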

	
    In a similar way to Section \ref{sect-convex-envelopes}, 
    we will show that if $g$ is a continuous function, then 
    \begin{equation}
        \label{eq:bilimb}
        \lim_{x\to \pi\in \partial \T} \tilde{u}_g (x) = 
            g(\psi(\pi))\quad\forall\pi\in\partial\T.
    \end{equation}
    As before, to prove this claim we need a comparison principle.

    \begin{lem} \label{lema.bicompar.convexas}
        Let $u$ and $v$ satisfy
        \begin{align}
            \label{eq.bienvolvente.44.compar.u}
            u(x) &\ge 
                	\min_{\substack{y,z\in\mathcal{S}(x_0)\\ y\neq z}}\frac{u(y) +u(z)}2 \quad\forall x\in\T,\\
			\label{eq.bienvolvente.44.compar.v}
			v(x) &\le 
					\min_{\substack{y,z\in\mathcal{S}(x_0)\\ y\neq 
					z}}
					\frac{v(y) +v(z)}2 , \quad\forall x\in\T,
        \end{align}
        with 
        \[
            \lim_{x\to \pi\in \partial \T} 
            u (x) \geq  \lim_{x\to \pi\in \partial \T} v (x) ,
        \]
        for every $\pi\in \partial \T$. Then,
        \[
            u(x) \geq v(x) \qquad \forall x\in \T.
        \]
    \end{lem}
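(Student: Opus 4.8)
The plan is to follow the proof of the comparison principle in the convex setting (Lemma~\ref{lema.compar.convexas}), which in fact becomes simpler here because the mean value expression in \eqref{subsol2} only involves the successors of a node and never the predecessor. First I would reduce to the case of a strict boundary inequality: after adding a positive constant $c$ to $u$, one may assume
\[
\lim_{x\to\pi\in\partial\T}u(x) > \lim_{x\to\pi\in\partial\T}v(x)\qquad\forall\pi\in\partial\T,
\]
prove $u\ge v$ under this strict hypothesis, and recover the general statement by letting $c\to0$.

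Arguing by contradiction, I would set $M=\sup_{x\in\T}\big(v(x)-u(x)\big)$ and assume $M>0$. The first step is to check that $M$ is attained on a finite set of nodes. Since for every branch $\pi$ we have $\lim_{x\to\pi}\big(v(x)-u(x)\big)<0<M$, and since $\partial\T$ is compact, there is a level $K$ such that $v(x)-u(x)<M$ whenever $|x|>K$. Hence the supremum is actually a maximum, attained only among the finitely many nodes with $|x|\le K$; in particular the set of maximizers is finite and contained in $\{\,x\in\T : |x|\le K\,\}$.

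The core of the argument is a propagation step. Let $x$ be a maximizer and let $(y_u,z_u)$ be a pair of distinct successors realizing $\min_{y\neq z}\tfrac{u(y)+u(z)}2$. Using \eqref{eq.bienvolvente.44.compar.u} and \eqref{eq.bienvolvente.44.compar.v}, and that the minimum defining the $v$--term is taken over \emph{all} pairs of distinct successors, I would estimate
\[
M = v(x)-u(x)\le \frac{v(y_u)+v(z_u)}2-\frac{u(y_u)+u(z_u)}2 = \frac{\big(v(y_u)-u(y_u)\big)+\big(v(z_u)-u(z_u)\big)}2\le M.
\]
Equality throughout, together with $v(w)-u(w)\le M$ for every $w\in\T$, forces $v(y_u)-u(y_u)=v(z_u)-u(z_u)=M$, so the two distinct successors $y_u,z_u$ of $x$ are again maximizers.

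This is where the binary case is genuinely easier than the convex one: since $y_u$ and $z_u$ are successors, they lie at level $|x|+1$, strictly above $x$. Taking a maximizer of maximal level—which exists because the maximizing set is finite—its successors would be maximizers at a strictly higher level, a contradiction (equivalently, downward propagation produces maximizers at levels $>K$, contradicting the bound found above). Hence $M\le0$, i.e.\ $v\le u$ under the strict boundary inequality, and letting $c\to0$ yields $u(x)\ge v(x)$ for all $x\in\T$. The only point requiring genuine care is the finiteness (level bound) of the maximizing set; once that is in place, the propagation step and the resulting contradiction are immediate.
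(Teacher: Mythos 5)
Your proof is correct and follows essentially the same route as the paper's: reduce to a strict boundary inequality by adding a constant to $u$, suppose $M=\max_{x\in\T}\big(v(x)-u(x)\big)>0$, use the two mean-value inequalities to show the maximum propagates to two distinct successors of any maximizer, and contradict the finiteness of the maximizing set (recovering the general case by letting $c\to0$). You merely spell out two points the paper leaves implicit---the level bound making the maximizing set finite and the ``maximizer of maximal level'' phrasing of the final contradiction---so this is a refinement of detail, not a different approach.
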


    \begin{proof}
        Adding a positive constant $c$ to $u,$ 
        we may assume that 
        \begin{equation} \label{biestric}
                \lim_{x\to \pi\in \partial \T} u (x)  >  
                \lim_{x\to z\in \partial \T} v (x) .
        \end{equation}
        
        We argue by contradiction, so, assume that
        \[
            M = \max_{x\in \T} (v(x)-u(x)) >0.
        \]
        Notice that the maximum is attained thanks to 
        \eqref{biestric}. Also by \eqref{biestric},
        we have that $M$ is attained only in a finite set of vertices. Let ${x}$ be one of such vertices. 
        From \eqref{eq.bienvolvente.44.compar.u} and 
        \eqref{eq.bienvolvente.44.compar.v} we obtain 
                \[
            M= v(x) - u(x) \le
					\min_{\substack{y,z\in\mathcal{S}(x_0)\\ 
					y\neq z}}\frac{v(y) +v(z)}2
			    -\min_{\substack{y,z\in\mathcal{S}(x_0)\\ y\neq z}}
					\frac{u(y) +u(z)}2. 
        \]
        Now, using that
        $$
        M\geq \left(\frac{v(y) +v(z)}2 \right) - \left( \frac{u(y) +u(z)}2 \right), \qquad \forall y,z\in\mathcal{S}(x_0)\, y\neq z,
        $$
                we get
           \[
            M \le
					\min_{\substack{y,z\in\mathcal{S}(x_0)\\ 
					y\neq z}}\frac{v(y) +v(z)}2
			    -\min_{\substack{y,z\in\mathcal{S}(x_0)\\ y\neq z}}
					\frac{u(y) +u(z)}2 \leq M. 
        \]        
        Then, there exist two nodes $x_1, x_2\in\S(x)$ such that 
        \[
            v(x_1)-u(x_1) = M,\qquad \text{and} 
            \qquad v(x_2)-u(x_2) = M.
        \]
        Since this happens for every $x$ in the set of maximums of 
        $v-u$ and this set is finite, we obtain a contradiction that 
        shows that
        \[
            u(x) \geq v(x)
        \]
        and proves the result.
    \end{proof}
    
    Now we will show \eqref{eq:bilimb}.
    
    \begin{teo}\label{teo:bilimb}
        Let $g\colon[0,1]\to\mathbb{R}$ be a continuous function. 
        Then, for any $\pi\in\partial\T$
        \[
            \lim_{x\to \pi\in \partial \T} \tilde{u}_g (x) = 
            g(\psi(\pi)).
        \]
    \end{teo}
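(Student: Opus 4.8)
The plan is to follow verbatim the strategy of Theorem \ref{teo:limb}, substituting the binary analogues of the tools used there: the barrier of Lemma \ref{lema:bicaract} replaces that of Lemma \ref{lema:caract}, the comparison principle of Lemma \ref{lema.bicompar.convexas} replaces Lemma \ref{lema.compar.convexas}, and the characterization of binary convexity in Lemma \ref{lema:biconvexeq} together with Lemma \ref{lema:biconvenv1} supplies that $\tilde{u}_g$ is a subsolution of \eqref{eq.tree.convex.II}. Since $u\in\mathcal{B}(g)$ if and only if $u+c\in\mathcal{B}(g+c)$ for every constant $c$, I may assume without loss of generality that $g\ge0$. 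I would fix a branch $\pi_0=(y_1,\dots,y_k,\dots)\in\partial\T$ and, for each $n\in\N$, take $z_n=(y_1,\dots,y_n)$, the level-$n$ vertex of $\pi_0$. Then $\psi(y_k)\in I_{z_n}$ for all $k\ge n$, the intervals $I_{z_n}$ shrink to $\{\psi(\pi_0)\}$ as $n\to\infty$, and $y_k\in\T^{z_n}$ for $k\ge n$, so the barriers below are eventually constant along $\pi_0$.

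For the lower bound I would set $c_n=\min\{g(t):t\in I_{z_n}\}\ge0$ and $w_n=c_n\,u_{z_n}$, with $u_{z_n}$ the binary convex function of Lemma \ref{lema:bicaract}. Multiplying \eqref{subsol2} by the nonnegative constant $c_n$ and invoking Lemma \ref{lema:biconvexeq} shows $w_n$ is binary convex, while Lemma \ref{lema:bicaract} gives $\limsup_{x\to\pi}w_n(x)\le c_n\,\CChi_{I_{z_n}}(\psi(\pi))\le g(\psi(\pi))$ for all $\pi$, the last step using $g\ge0$. Hence $w_n\in\mathcal{B}(g)$, so directly from the definition of the envelope $w_n\le\tilde{u}_g$ on $\T$. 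Evaluating along $\pi_0$, where $w_n(y_k)=c_n$ for $k\ge n$, gives $c_n\le\liminf_k\tilde{u}_g(y_k)$; letting $n\to\infty$ and using the continuity of $g$ yields $g(\psi(\pi_0))\le\liminf_k\tilde{u}_g(y_k)$.

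For the upper bound I would take $a=2\max_{[0,1]}g$, $b_n=\max\{g(t):t\in I_{z_n}\}$, and $w^n=a(1-u_{z_n})+b_n u_{z_n}=a+(b_n-a)u_{z_n}$. The crucial point is that $w^n$ is a supersolution of \eqref{eq.tree.convex.II}: since $u_{z_n}$ solves the equation with equality and $b_n-a\le0$, the affine change swaps the minimum for a maximum, so
\begin{align*}
    w^n(x)&=a+(b_n-a)\min_{\substack{y,z\in\S(x)\\ y\neq z}}\frac{u_{z_n}(y)+u_{z_n}(z)}2\\
    &=\max_{\substack{y,z\in\S(x)\\ y\neq z}}\frac{w^n(y)+w^n(z)}2
    \ge\min_{\substack{y,z\in\S(x)\\ y\neq z}}\frac{w^n(y)+w^n(z)}2,
\end{align*}
which is precisely \eqref{eq.bienvolvente.44.compar.u}. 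Moreover $\lim_{x\to\pi}w^n(x)$ equals $b_n\ge g(\psi(\pi))$ on branches through $z_n$ and $a\ge g(\psi(\pi))$ otherwise, so its boundary values dominate $g$. Applying Lemma \ref{lema.bicompar.convexas} with $w^n$ as supersolution and an arbitrary $v\in\mathcal{B}(g)$ as subsolution (which satisfies \eqref{eq.bienvolvente.44.compar.v} by Lemma \ref{lema:biconvexeq} and whose boundary limit superior is dominated by that of $w^n$) gives $v\le w^n$, hence $\tilde{u}_g\le w^n$. Along $\pi_0$, where $w^n(y_k)=b_n$ for $k\ge n$, this yields $\limsup_k\tilde{u}_g(y_k)\le b_n$, and letting $n\to\infty$ with $g$ continuous gives $\limsup_k\tilde{u}_g(y_k)\le g(\psi(\pi_0))$. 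Combining the two bounds proves $\lim_{x\to\pi_0}\tilde{u}_g(x)=g(\psi(\pi_0))$, and arbitrariness of $\pi_0$ finishes the argument.

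I expect the only genuinely delicate point to be the supersolution identity for $w^n$: one must exploit that $u_{z_n}$ satisfies \eqref{eq.tree.convex.II} with equality and that $b_n-a\le0$, so that multiplication by a nonpositive factor interchanges $\min$ and $\max$, and then the trivial bound $\max\ge\min$ delivers \eqref{eq.bienvolvente.44.compar.u}. The bookkeeping at the endpoints of $I_{z_n}$, where Lemma \ref{lema:bicaract} only controls $\limsup$, is harmless, because along the fixed branch $\pi_0$ both barriers are eventually constant and the admissibility in $\mathcal{B}(g)$ and the comparison principle require only one-sided boundary control.
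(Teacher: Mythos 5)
Your proposal is correct and follows essentially the same route as the paper, which proves this theorem precisely by repeating the argument of Theorem \ref{teo:limb} with Lemmas \ref{lema:bicaract} and \ref{lema.bicompar.convexas} in place of their convex counterparts; your supersolution computation for $w^n$ (using that $u_{z_n}$ solves the equation with equality and that $b_n-a\le0$ turns the minimum into a maximum) is exactly the step the paper leaves implicit. The only cosmetic difference is the lower bound, which the paper gets immediately from Remark \ref{re:cfisbcf} (every convex function is binary convex, so $\tilde{u}_g\ge u^*_g$) combined with Theorem \ref{teo:limb}, whereas you rerun the barrier argument directly with $c_n u_{z_n}$ --- both rest on the same machinery.
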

    
    \begin{proof}
        Let us start by observing that, for any constant $c$, 
        $u\in\mathcal{B}(g)$ if only if 
        $u+c\in\mathcal{B}(g+c).$ Therefore, 
        without loss of generality, we may 
        assume that $g$ is nonnegative.
        
        Consequently, by Remark \ref{re:cfisbcf}
        and Theorem \ref{teo:limb}, we have
        \[
            \liminf_{x\to \pi\in \partial \T} \tilde{u}_g (x) \ge
            \lim_{x\to \pi\in \partial \T} u^*_g (x)=
            g(\psi(\pi))
        \]
        for any $\pi\in\partial\T.$
        
        To complete the proof, we proceed as in the end of the proof 
        of Theorem \ref{teo:limb}, using Lemmas \ref{lema:bicaract} and \ref{lema.bicompar.convexas} instead of Lemmas
        \ref{lema:caract} and \ref{lema.compar.convexas}.
    \end{proof}

    Finally, with analogous arguments of the Section \ref{sect-convex-envelopes}, we get the following results.

    \begin{teo}\label{teo:bilargesol} 
        Let $g\colon[0,1]\to\mathbb{R}$ be a continuous
        functions. The binary convex envelope $\tilde{u}_g$ 
        is characterized as the largest solution
        to
        \begin{equation} \label{eq.bienvolvente}
            u (x)  = 
					\min_{\substack{y,z\in\mathcal{S}(x)\\ y\neq z}}
					\frac{u(y) +u(z)}2 \quad\text{on }\T,
        \end{equation}
        that verifies 
        \[
            \limsup_{x\to \pi\in \partial \T} u (x) \leq 
            g(\psi(\pi)).
        \]
    \end{teo}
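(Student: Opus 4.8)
The plan is to transcribe the proof of Theorem \ref{teo:largesol} almost verbatim, replacing the convex notions by their binary counterparts and using Lemma \ref{lema:biconvexeq} in place of Lemma \ref{lema:convexeq}. First I would record that $\tilde u_g$ is a subsolution: by Lemma \ref{lema:biconvenv1} the binary convex envelope is a binary convex function, so by Lemma \ref{lema:biconvexeq} it satisfies
\[
\tilde u_g(x) \le \min_{\substack{y,z\in\mathcal{S}(x)\\ y\neq z}}\frac{\tilde u_g(y)+\tilde u_g(z)}{2}\qquad\forall x\in\T,
\]
and, by the very definition of $\mathcal{B}(g)$, it obeys the boundary bound $\limsup_{x\to\pi}\tilde u_g(x)\le g(\psi(\pi))$. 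Thus $\tilde u_g$ is a subsolution of \eqref{eq.bienvolvente} lying below $g$ on $\partial\T$.

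Next, to upgrade the inequality to an equality, I argue by contradiction exactly as before. Suppose the inequality is strict at some node $x_0$, pick $\delta>0$ so small that $\tilde u_g(x_0)+\delta$ is still below the right-hand side at $x_0$, and set
\[
v(x)=\begin{cases} \tilde u_g(x) & x\neq x_0,\\ \tilde u_g(x_0)+\delta & x=x_0.\end{cases}
\]
I would then verify that $v$ still satisfies \eqref{subsol2} at every node: at $x_0$ this is the choice of $\delta$; at the predecessor $\hat x_0$ (the only node having $x_0$ as a successor) the right-hand side $\min_{y,z}\frac{v(y)+v(z)}{2}$ can only increase when the value at one successor is raised, so $v(\hat x_0)\le\min_{y,z}\frac{v(y)+v(z)}{2}$ persists; and at every other node nothing changes. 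Hence Lemma \ref{lema:biconvexeq} makes $v$ binary convex. Since $v$ and $\tilde u_g$ differ only at the single finite node $x_0$, the boundary $\limsup$ is unaffected, so $v\in\mathcal{B}(g)$; but $v(x_0)>\tilde u_g(x_0)$ contradicts $\tilde u_g$ being the pointwise supremum over $\mathcal{B}(g)$. Therefore equality holds in \eqref{eq.bienvolvente}.

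Finally, for maximality I would take any solution $v$ of \eqref{eq.bienvolvente} with $\limsup_{x\to\pi}v(x)\le g(\psi(\pi))$. A solution of \eqref{eq.bienvolvente} in particular satisfies \eqref{subsol2}, so by Lemma \ref{lema:biconvexeq} it is binary convex, and together with the boundary bound this gives $v\in\mathcal{B}(g)$, whence $v\le\tilde u_g$ pointwise. Since $\tilde u_g$ is itself such a solution, it is the largest one, which is the assertion.

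The only genuinely delicate step is the perturbation argument, namely checking that raising the value at the single node $x_0$ preserves \eqref{subsol2} at $\hat x_0$. In the present binary setting this is markedly simpler than in the convex case of Theorem \ref{teo:largesol}, since \eqref{subsol2} has no predecessor term and the minimum over pairs of successors is monotone in each successor value; everything else is a direct repetition of the convex argument.
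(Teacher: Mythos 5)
Your proposal is correct and follows essentially the same route as the paper, which proves this theorem by transcribing the argument of Theorem \ref{teo:largesol} with Lemmas \ref{lema:biconvenv1} and \ref{lema:biconvexeq} replacing Lemmas \ref{lema:convenv1} and \ref{lema:convexeq}: subsolution property of $\tilde{u}_g$, the single-node perturbation to force equality, and maximality via the fact that any solution of \eqref{eq.bienvolvente} satisfies \eqref{subsol2} and is therefore binary convex, hence in $\mathcal{B}(g)$. The only cosmetic difference is that the paper packages the maximality step through an auxiliary function $\overline{u}$ defined as the supremum of all solutions lying below $g$, whereas you compare an arbitrary solution with $\tilde{u}_g$ directly; the substance is identical.
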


    \begin{teo} 
        Let $g\colon [0,1] \mapsto \mathbb{R}$ be a continuous function. Then, there exists a unique solution to \eqref{eq.bienvolvente} such that         for any $\pi\in\T$
        \[ 
            \lim_{x\to \pi \in \partial \T} u (x) = g(\psi(\pi)).
        \]
    \end{teo}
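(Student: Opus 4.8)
The plan is to obtain both existence and uniqueness directly from the three results already established in this section: the characterization of the binary convex envelope as the largest solution (Theorem \ref{teo:bilargesol}), the continuous attainment of the datum (Theorem \ref{teo:bilimb}), and the comparison principle (Lemma \ref{lema.bicompar.convexas}). No new machinery is needed; the whole statement is a packaging of these three facts.

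For existence I would simply exhibit the binary convex envelope $\tilde{u}_g$ as the desired solution. By Theorem \ref{teo:bilargesol}, $\tilde{u}_g$ solves \eqref{eq.bienvolvente} on $\T$, and by Theorem \ref{teo:bilimb}, since $g$ is continuous, it attains the boundary datum continuously, that is $\lim_{x\to\pi}\tilde{u}_g(x)=g(\psi(\pi))$ for every $\pi\in\partial\T$. Hence $\tilde{u}_g$ is a solution of \eqref{eq.bienvolvente} with the required boundary behaviour, which settles existence.

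For uniqueness, suppose $u_1$ and $u_2$ both solve \eqref{eq.bienvolvente} and both satisfy $\lim_{x\to\pi}u_i(x)=g(\psi(\pi))$ for every $\pi\in\partial\T$. Since each $u_i$ is an \emph{exact} solution of the equation, each satisfies simultaneously the supersolution inequality \eqref{eq.bienvolvente.44.compar.u} and the subsolution inequality \eqref{eq.bienvolvente.44.compar.v}. I would then apply Lemma \ref{lema.bicompar.convexas} twice. First, taking $u=u_1$ as supersolution and $v=u_2$ as subsolution: because $\lim_{x\to\pi}u_1(x)=g(\psi(\pi))=\lim_{x\to\pi}u_2(x)$, the boundary hypothesis of the lemma holds (with equality), so the conclusion gives $u_1\ge u_2$ on all of $\T$. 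Reversing the roles of $u_1$ and $u_2$ in the same lemma yields $u_2\ge u_1$. Combining the two inequalities gives $u_1\equiv u_2$.

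I do not expect any genuine obstacle here, since the analytic work is carried entirely by the comparison principle. The only point that deserves care is the observation that an exact solution of \eqref{eq.bienvolvente} qualifies at once as a supersolution and as a subsolution in the sense of \eqref{eq.bienvolvente.44.compar.u}--\eqref{eq.bienvolvente.44.compar.v}; this is what allows Lemma \ref{lema.bicompar.convexas} to be invoked in both directions with the identical boundary datum, forcing equality. This mirrors exactly the remark following Theorem \ref{teo:largesol} in the convex case.
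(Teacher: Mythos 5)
Your proposal is correct and coincides with the paper's own (largely implicit) argument: the paper derives this statement exactly by combining the characterization of $\tilde{u}_g$ as the largest solution (Theorem \ref{teo:bilargesol}), the continuous attainment of the datum (Theorem \ref{teo:bilimb}), and the comparison principle (Lemma \ref{lema.bicompar.convexas}) applied in both directions, just as you do. The only point worth noting is that the paper leaves this packaging to the reader (``with analogous arguments of Section \ref{sect-convex-envelopes}''), whereas you have written it out explicitly, including the observation that an exact solution is simultaneously a supersolution and a subsolution.
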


        \medskip

{\bf Acknowledgements.}  \

Supported by CONICET grant PIP GI 11220150100036CO (Argentina), by UBACyT grant 20020160100155BA (Argentina) and by MINECO MTM2015-70227-P (Spain).


\end{document}